\newtheorem{theo}{Theorem}[section]
\newtheorem{prop}{Proposition}[section]
\newtheorem{lem}{Lemma}[section]
\newtheorem{remark}{Remark}[section]
\newcommand{\be}{\begin{equation}}
\newcommand{\ee}{\end{equation}}
\newcommand\bes{\begin{eqnarray}} \newcommand\ees{\end{eqnarray}}
\newcommand{\bess}{\begin{eqnarray*}}
\newcommand{\eess}{\end{eqnarray*}}
\newcommand\kk{\left}
\newcommand\rr{\right}
\newcommand\dd{\displaystyle}
\newcommand\df{\dd\frac}
\newcommand\ty{T_{max}}
\newcommand\tyy{\tilde T_{max}}
\newcommand\lm{\lambda}
\newcommand\nm{\nonumber}
\newcommand\yy{\infty}
\newcommand\nn{\nabla}
\newcommand\pl{\partial}
\newcommand\ii{\int_\Omega}
\newcommand\oo{\Omega}
\newcommand\R{\mathbb{R}}
\newcommand\tr{\Delta}
\newcommand\ub{\bar u}
\newcommand\vb{\bar v}
\newcommand\wb{\bar w}
\newcommand\dv{\frac{d}{dt}}
\newcommand\hm{\mathcal{H}}
\newcommand\faa{{\rm for\ all}}
\newcommand\cd{\cdot}
\newcommand\tk{\tilde k}
\begin{document}
\setlength{\baselineskip}{16pt} \pagestyle{myheadings}

\begin{center}{\LARGE\bf Global existence and stabilization in a }\\[2mm]
{\LARGE\bf  forager-exploiter model with general logistic sources}\footnote{This work was partially supported by {\it Educational research projects for young and middle-aged teachers in Fujian (No. JAT200480)} and {\it Startup Foundation for Advanced Talents of Xiamen University of Technology (No. YKJ20019R)}.}\\[4mm]
 {\Large  Jianping Wang\footnote{Corresponding author. {\sl E-mail}: jianping0215@163.com}}\\[1mm]
{School of Applied Mathematics, Xiamen University of Technology, Xiamen, 361024, China}
\end{center}

\begin{quote}
\noindent{\bf Abstract.} We study a forager-exploiter model with generalized logistic sources in a smooth bounded domain with homogeneous Neumann boundary conditions. A new boundedness criterion is developed to prove the global existence and boundedness of the solution. Under some conditions on the logistic degradation rates, the classical solution exists globally and remain bounded in the high dimensions. Moreover, the large time behavior of the obtained solution is investigated in the case of the nutrient supply is a positive constant or has fast decaying property.

\noindent{\bf Keywords:} Forager-exploiter model; Chemotaxis; Logistic sources; Boundedness; Stabilization.

\noindent {\bf AMS subject classifications (2010)}:
35A09, 35B40, 35K55, 35Q92, 92C17.
 \end{quote}

 \section{Introduction}
 \setcounter{equation}{0} {\setlength\arraycolsep{2pt}
We consider the forager-exploiter system with generalized logistic sources
\bes
 \left\{\begin{array}{lll}
 u_t=\tr u-\chi\nn\cd(u\nn w)+a_1u-b_1u^\alpha,&x\in\Omega,\ \ t>0,\\[1mm]
 v_t=\tr v-\xi\nn\cd(v\nn u)+a_2v-b_2v^\beta,&x\in\Omega,\ \ t>0,\\[1mm]
 w_t=\tr w-(u+v)w-\mu w+r,&x\in\Omega,\ \ t>0,\\[1mm]
 \frac{\partial u}{\partial\nu}=\frac{\partial v}{\partial\nu}=\frac{\partial w}{\partial\nu}=0,\ \ &x\in\partial\Omega,\ t>0,\\[1mm]
  u(x,0)=u_0(x),\ v(x,0)=v_0(x),\ w(x,0)=w_0(x),\ &x\in\Omega,
 \end{array}\right.\label{1.1}
 \ees
where $\oo$ is a bounded domain in $\R^N$ with smooth boundary $\pl\oo$, $\frac{\partial }{\partial\nu}$ denotes the homogeneous Neumann boundary condition, and $\chi,\xi,\mu$ are positive constants. We use $u,v$ and $w$ to denote population densities of the forager, exploiter and nutrient, respectively. This model include two basic settings. Firstly, the forager and exploiter pursuit nutrient as their common food. Secondly, except the random diffusions, the forager move towards the increasing nutrient gradient direction (corresponding to the first taxis dynamic: $-\nn\cd(u\nn w)$), while the exploiter follows the forager to find nutrients (corresponding to the second taxis mechanism: $-\nn\cd(v\nn u)$). It has been indicated in \cite{Tania2012,Winkler-M3AS2019} that, as a doubly cross-diffusive parabolic system, \eqref{1.1} possess more complex dynamics than the single-taxis model. Although problem \eqref{1.1} has attracted many attentions recently, only a few results are available. The mathematical results are summarized as follows:

{\bf Without kinetic source terms: $a_1=a_2=b_1=b_2=0$.} The classical solution exists globally and remain bounded in one space dimension (\cite{TaoW-2019-forager}). Whereas, in the high dimensions, smallness conditions on the initial data and production rate or weak taxis effects are required to ensure the global solvability of \eqref{1.1} (\cite{WW-M3AS2020}). This differs from the chemotaxis-consumption or prey taxis model whose global solvability in two dimensional case is well-known (\cite{JinW,Taow-jde2012}). Even for the generalized solutions, the global existence relies on some smallness conditions on the initial data $w_0$ and production rate of $w$. In \cite{cao-tao2021NARWA,Liu-nwrwa2019}, it is found that the full saturation or limited saturation can exclude blow up phenomenon. For the study of the forager-exploiter model with singular sensitivities, nonlinear resource consumption or proliferation, please refer to \cite{cao-M3AS2020,Liw-zamp2021,lz-zamp2020}.

{\bf With generalized logistic source: $a_1,a_2,b_1,b_2>0$.} In \cite{Black-M3AS2020}, the global generalized solutions are obtained in two dimensional setting provided $\alpha>\sqrt{2}+1$ and $\min\{\alpha,\beta\}>(\alpha+1)/(\alpha-1)$, and the eventual smoothness of the generalized solution after some waiting time are provided under some more restrict conditions. When $2\le \alpha<3$ and $\beta\ge3\alpha/(2\alpha-3)$ or $\alpha,\beta\ge3$, the global existence and boundedness of the classical solution in two dimensional setting are obtained in \cite{WW-M3AS2020}. Later on, the recent work \cite{mu-2020dcds} claims that $2\le \alpha<3,\beta\ge3$ suffices to ensure the global boundedness of the solution. It is noted that these results only hold in two dimensional setting, and the conditions for $\alpha,\beta$ seem too strong and may be far from optimal. Moreover, no results on the global solvability of \eqref{1.1} with $a_1,a_2,b_1,b_2>0$ in high dimensions ($N>2$) are available. This manuscript has two motivations. The first one is to establish the global solvability of \eqref{1.1} in the high dimensions ($N>2$) and relax the conditions in two dimensional case. The other one is to establish the large time behavior of the solutions obtained.

Before stating the main results, we give basic assumptions for the initial data and nutrient supply $r$. The initial data $u_0,v_0,w_0$ satisfy
   \[u_0,v_0,w_0\in W^{2,\infty}(\Omega), \ u_0,v_0,w_0\ge 0,\,\not\equiv 0\ \ {\rm on}\ \bar\Omega, \ \ \mbox{and} \ \ \frac{\partial u_0}{\partial\nu}=\frac{\partial v_0}{\partial\nu}=\frac{\partial w_0}{\partial\nu}=0\ \ {\rm on}\ \partial\Omega.\]
The production rate function $r$ is nonnegative and satisfies
\bes
r\in C^1(\bar\Omega\times[0,\infty))\cap L^\infty(\Omega\times(0,\infty)). \label{1.2}
\ees
We can hence denote
\bes
r_*:=\|r\|_{L^\yy(\oo\times(0,\yy))}.\label{1.3}
\ees

Throughout this article, we always set $a_1,a_2,b_1,b_2>0$. The first result concerns the global solvability of \eqref{1.1} in high dimensions (i.e., $N>2$).
\begin{theo}\label{t1.1}
Let $N>2$. Suppose that $\alpha,\beta>\frac{N}2+1$. Then the problem \eqref{1.1} admits a unique nonnegative and global solution $(u,v,w)\in(C^{2,1}(\bar\oo\times(0,\yy)))^3$ which is bounded in $\bar\oo\times(0,\yy)$.
\end{theo}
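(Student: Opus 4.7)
The route is a standard continuation argument. Local existence yields a classical solution on some maximal interval $[0,\ty)$ (via a fixed-point scheme adapted to doubly-taxis systems, cf.\ \cite{WW-M3AS2020}) together with the extensibility criterion that $\ty<\yy$ implies
\[\limsup_{t\nearrow\ty}\bbb(\|\uj\|_{\lo}+\|\vm\|_{\lo}+\|\nn w(\cd,t)\|_{\lo}\bbb)=\yy.\]
It therefore suffices to derive uniform-in-$t$ $\lo$-bounds on $u$, $v$, and $\nn w$ over $[0,\ty)$. I arrange the a priori estimates in three layers: cheap bounds, $\lo$-control of $u$, and then $\lo$-control of $v$ and $\nn w$.

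\textbf{Cheap bounds.} Spatial integration of the $u$- and $v$-equations combined with the super-linearity of the logistic terms yields $\sup_{t<\ty}(\ii u+\ii v)<\yy$. Parabolic comparison applied to the $w$-equation, using $u+v\ge0$, gives $\|w\|_{L^\yy(\oo\times(0,\ty))}\le\max\{\|w_0\|_{\lo},r_*/\mu\}$. Since $r$ and $w$ are bounded, the source $r-(u+v)w$ inherits the $L^q$-integrability of $u+v$, and Neumann heat-semigroup estimates then translate any uniform $L^q$-bound on $u+v$ into a uniform $L^s$-bound on $\nn w$ with $s$ depending on $q$ and $N$.

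\textbf{$\lo$-boundedness of $u$.} For $p\ge2$, testing the $u$-equation with $u^{p-1}$ and applying Young's inequality on the chemotactic flux produces the ODI
\[\df{1}{p}\dv\ii u^p+\df{2(p-1)}{p^2}\ii|\nn u^{p/2}|^2+b_1\ii u^{p+\ap-1}\le C_p\ii u^p|\nn w|^2+a_1\ii u^p.\]
Hölder's inequality with exponents $\tfrac{p+\ap-1}{p}$ and $\tfrac{p+\ap-1}{\ap-1}$ splits the first right-hand term into an absorbable part $\eta\ii u^{p+\ap-1}$ plus $\tilde C_\eta\ii|\nn w|^{2(p+\ap-1)/(\ap-1)}$. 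The hypothesis $\ap>\tfrac{N}{2}+1$ is precisely what permits the last $\nn w$-integral to be estimated through a lower-order $L^q$-norm of $u+v$ via the semigroup step, so that, starting from the $L^1$-bound on $u+v$, one may bootstrap to uniform $L^p$-bounds on $u$ for every $p<\yy$. A Moser iteration upgrades these to $\|u\|_{L^\yy(\oo\times(0,\ty))}<\yy$, and maximal Sobolev regularity applied to $u_t-\tr u=-\chi\nn\cd(u\nn w)+a_1u-b_1u^\ap$ then furnishes $\nn u\in L^p(\oo\times(0,T))$ uniformly in $T<\ty$ for every $p<\yy$.

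\textbf{$\lo$-boundedness of $v$ and $\nn w$, and the main obstacle.} The same $v^{p-1}$-test on the $v$-equation, with $\nn u$ now playing the role of $\nn w$, produces a parallel ODI whose cross term $\ii v^p|\nn u|^2$ is absorbed into the dissipation $b_2\ii v^{p+\beta-1}$ via Hölder, using $\beta>\tfrac{N}{2}+1$ and the $L^q$-estimate on $\nn u$ just obtained. A final Moser iteration yields $\|v\|_{\lo}<\yy$, after which $\nn w\in\lo$ follows from the heat semigroup applied to the $w$-equation with $\lo$-bounded source. This contradicts the extensibility criterion if $\ty<\yy$. The principal obstacle is precisely this second cross-diffusion $-\xi\nn\cd(v\nn u)$: unlike $\nn w$, which is cheap to control via linear smoothing of the $w$-equation, $\nn u$ must be excavated from a chemotaxis equation, so the $(u,w)$-layer must be fully closed before the $v$-analysis can begin. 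This asymmetry is the structural reason both $\ap$ and $\beta$ must exceed $\tfrac{N}{2}+1$.
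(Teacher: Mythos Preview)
Your layered strategy---close $(u,w)$ first, extract $\nabla u$-regularity, then close $v$---matches the paper's architecture, and you correctly flag the second cross-diffusion as the structural obstacle. But the first layer has a genuine gap: the bootstrap from the uniform $L^1$-bound on $u+v$ to arbitrary $L^p$ does not close. Acting on an $L^1$ source, the Neumann heat semigroup yields $\nabla w\in L^s(\Omega)$ only for $s<\tfrac{N}{N-1}$, whereas after your H\"older split the $\nabla w$-integral you must control carries exponent $\tfrac{2(p+\alpha-1)}{\alpha-1}\ge 2>\tfrac{N}{N-1}$ for every $p>0$ and every $N>2$. So the very first iteration already fails, and no value of $\alpha$ rescues it (the required exponent tends to $2$ as $\alpha\to\infty$, still out of reach). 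Your sentence that ``$\alpha>\tfrac{N}{2}+1$ is precisely what permits the last $\nabla w$-integral to be estimated through a lower-order $L^q$-norm of $u+v$'' is therefore unsupported: in your scheme the role of that hypothesis is never made concrete.

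What is missing is that the logistic degradation yields far more than the $L^1$ mass bound you extract in your ``cheap bounds'' step: integrating the $u$- and $v$-equations over $\Omega$ and then over a unit time window gives, for free, the \emph{space-time} estimates $\int_t^{t+\tau}\!\int_\Omega u^\alpha\le C$ and $\int_t^{t+\tau}\!\int_\Omega v^\beta\le C$ (Lemma~\ref{l2.2}, estimate~\eqref{2.4}). This is exactly where the hypothesis $\alpha,\beta>\tfrac{N}{2}+1$ enters in the paper: those cheap exponents already exceed the threshold of the boundedness criterion in Proposition~\ref{p3.4}, which guarantees global boundedness from space-time $L^{\bar q}$ and $L^{\bar p}$ control of $u$ and $v$ with $\bar q,\bar p>\tfrac{N}{2}+1$. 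The proof of Theorem~\ref{t1.1} is then a single line; no iteration on $p$ is needed. The heavy lifting sits inside the criterion (Lemmas~\ref{l3.1}--\ref{l3.2}), which proceeds via space-time rather than uniform-in-time regularity and obtains $\|u\|_{L^\infty}$ through a variation-of-constants representation rather than $u^{p-1}$-testing.
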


In two dimensional setting, we have
\begin{theo}\label{t1.2}
Let $N=2$ and $\alpha\ge2$ with $\beta>2$. Then there exists a solution $(u,v,w)\in(C^{2,1}(\bar\oo\times(0,\yy)))^3$ solving \eqref{1.1} uniquely and remain bounded in $\bar\oo\times(0,\yy)$.
\end{theo}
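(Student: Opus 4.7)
The plan is to apply the boundedness criterion used in the proof of Theorem~\ref{t1.1} (which reduces global existence and $L^\yy$ boundedness to uniform--in--time $L^p$ bounds on $u$ and $v$ for some $p$ exceeding a threshold depending on $N$) and to verify those $L^p$ bounds by energy estimates that exploit the favourable two--dimensional embeddings together with the logistic exponents $\alpha\ge 2$ and $\beta>2$.

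I would begin with local--in--time existence on a maximal interval $[0,\ty)$ and the standard non--extendability criterion; a comparison argument on the third equation of \eqref{1.1} gives $\|w\|_{L^\yy(\oo\times(0,\ty))}\le\max\{\|w_0\|_\lo,r_*/\mu\}$ from the outset. Testing the first equation with $u^{p-1}$, integrating by parts twice, and substituting $\tr w$ from the $w$--equation yields the identity
\[
\df{1}{p}\dv\ii u^p+\df{4(p-1)}{p^2}\ii|\nn u^{p/2}|^2+b_1\ii u^{p+\alpha-1}+\df{(p-1)\chi}{p}\ii u^p\kk[(u+v)w+\mu w\rr]=a_1\ii u^p-\df{(p-1)\chi}{p}\ii u^pw_t+\df{(p-1)\chi}{p}\ii u^pr.
\]
Since $u,v,w\ge 0$, the bracketed contributions enter the left--hand side as genuine sinks, and in particular the $u$--estimate essentially decouples from $v$. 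The $a_1\ii u^p$ and $r_*$--terms are absorbed into $b_1\ii u^{p+\alpha-1}$ via Young's inequality. The only genuine difficulty is $-\frac{(p-1)\chi}{p}\ii u^pw_t$, which I would handle by the standard time--integration trick $\ii u^pw_t=\dv\ii u^pw-p\ii u^{p-1}u_tw$ and subsequent re--insertion of the $u$--equation for $u_t$; this produces at worst $\ii u^{p+\alpha-1}w$--type contributions absorbed by $b_1\ii u^{p+\alpha-1}$ precisely when $\alpha\ge 2$. Combined with the two--dimensional Gagliardo--Nirenberg inequality, this yields $\|\uj\|_{L^p(\oo)}\le C_p$ for every $p\in[1,\yy)$.

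For the second equation, testing with $v^{p-1}$ and two integrations by parts produce
\[
\df{1}{p}\dv\ii v^p+\df{4(p-1)}{p^2}\ii|\nn v^{p/2}|^2+b_2\ii v^{p+\beta-1}=a_2\ii v^p-\df{(p-1)\xi}{p}\ii v^p\tr u,
\]
and substituting $\tr u$ from the first equation produces the favourable sink $-\frac{(p-1)\xi b_1}{p}\ii v^pu^\alpha$, a mixed taxis contribution reducible to $\ii v^{p-1}u\,\nn v\cd\nn w$ which is controlled using the already--established regularity of $u$ and $\nn w$, a $\ii v^pu_t$ term handled by the same time--integration trick as above, and mixed terms $\ii v^pu$ dominated via Young by $\epsilon\,\ii v^{p+\beta-1}+\epsilon'\ii v^pu^\alpha+C$. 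Because $\beta>2$ (strictly), there is enough room in the exponents to carry out these Young splits and close the estimate against the previously obtained high $L^p$ bound on $u$.

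The principal obstacle is the mutual coupling through $\tr u$ in the $v$--equation: substituting via the first equation demands simultaneous integrability of $u_t$, $\nn\cd(u\nn w)$ and $u^\alpha$ in spaces compatible with $v^p$, and in $N=2$ the exponent budget $\alpha\ge 2$, $\beta>2$ is just tight enough to close this loop via Gagliardo--Nirenberg and Young. Handling the endpoint case $\alpha=2$ (where the absorption of the $w_t$--generated $\ii u^{p+1}w$--type terms into $b_1\ii u^{p+\alpha-1}$ is at equality) is the main technical refinement over Theorem~\ref{t1.1}. Once $\|\uj\|_{L^p(\oo)}+\|\vm\|_{L^p(\oo)}\le C_p$ is established for some $p$ above the criterion's threshold, the boundedness criterion upgrades it to $\|\uj\|_\lo+\|\vm\|_\lo\le C$, contradicting $\ty<\yy$ and yielding global existence and boundedness. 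Uniqueness follows from a standard energy estimate on the difference of two solutions.
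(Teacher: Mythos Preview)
Your overall strategy --- invoke the boundedness criterion (Proposition~\ref{p3.4}) and verify its hypotheses --- is exactly what the paper does. However, the way you propose to verify those hypotheses diverges substantially from the paper, and for $v$ in particular there is both an unnecessary complication and a real gap.

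\textbf{The $v$--estimate is free.} The criterion in Proposition~\ref{p3.4} asks for space--time bounds $\int_t^{t+\tau}\ii v^{\bar p}\le K_2$ with $\bar p>\frac N2+1=2$. Since $\beta>2$, the elementary logistic estimate \eqref{2.4} already delivers $\int_t^{t+\tau}\ii v^\beta\le C$ with $\beta>2$, and nothing further is needed. Your programme of testing the $v$--equation with $v^{p-1}$, substituting $\tr u$ via the first equation, and running the time--integration trick on $\ii v^p u_t$ is not required. Worse, it does not close the way the analogous step for $u$ does: the $u$--equation is \emph{not} of consumption type, so substituting $\tr u=u_t+\chi\nn\cd(u\nn w)-a_1u+b_1u^\alpha$ produces no structurally favourable sink beyond $-\ii v^pu^\alpha$. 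When you re--insert $v_t$ into $\ii v^{p-1}v_t u$, the taxis term $-\xi\nn\cd(v\nn u)$ regenerates a contribution of the form $+\,C\ii v^p|\nn u|^2$, which at that stage you have no way to control (you only have $L^p$ bounds on $u$, not on $\nn u$). So the claimed closure for $v$ is a genuine gap --- but one you never needed to face.

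\textbf{The $u$--estimate: a different route.} For $u$ the endpoint $\alpha=2$ does require work beyond \eqref{2.4}, and here your approach is a legitimate alternative to the paper's, though more delicate. The paper (Lemma~\ref{l4.1}) proceeds differently: since $\alpha,\beta\ge2$, \eqref{2.4} gives space--time $L^2$ bounds on $u$ and $v$; feeding these into the maximal Sobolev regularity of the heat semigroup (Lemma~\ref{l2.5}) yields $\int_t^{t+\tau}\ii|\tr w|^2\le C$; then testing the $u$--equation by $u$ and estimating the cross term $\chi\ii u\nn u\cd\nn w$ via the two--dimensional Gagliardo--Nirenberg inequality gives a differential inequality $z'(t)+\ii|\nn u|^2\le Cz(t)h(t)$ with $z=\ii u^2+1$ and $h=\ii|\tr w|^2+1$; a Gronwall argument using the space--time bound on $h$ yields $z(t)\le C$ and then $\int_t^{t+\tau}\ii|\nn u|^2\le C$, which the $2$D interpolation $\|u\|_{L^3}^3\le C\|\nn u\|_{L^2}^2\|u\|_{L^1}+C\|u\|_{L^1}^3$ upgrades to $\int_t^{t+\tau}\ii u^3\le C$. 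This avoids your time--integration trick entirely. Your substitution/trick approach can be made to work in consumption settings, but after the $w_t$--step you still have to control $\ii u^p|\nn w|^2$--type terms, which in $2$D again forces you back to $\|\tr w\|_{L^2}$ control --- i.e., essentially the paper's ingredient.

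In short: keep the criterion, drop the $v$--argument entirely in favour of \eqref{2.4}, and for $u$ the paper's maximal--regularity route to a space--time $L^3$ bound is both simpler and sufficient.
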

We remark that, Theorem \ref{t1.1} gives the first result of global solvability of forager-exploiter model with generalized logistic sources in high dimensions, and Theorem \ref{t1.2} improves the previous results in \cite{WW-M3AS2020,mu-2020dcds}. The proofs of Theorem \ref{t1.1} and Theorem \ref{t1.2} are based on a boundedness criterion in Proposition \ref{p3.4}. This criterion is different from the known one in \cite{BBTW}. We use the nutrient-taxis model as an example to see the difference of these two criteria. Dropping $v$ and its equation, system \eqref{1.1} becomes a nutrient-taxis model
\bes
 \left\{\begin{array}{lll}
 u_t=\tr u-\chi\nn\cd(u\nn w)+a_1u-b_1u^\alpha,&x\in\Omega,\ \ t>0,\\[1mm]
 w_t=\tr w-uw-\mu w+r,&x\in\Omega,\ \ t>0,\\[1mm]
 \frac{\partial u}{\partial\nu}=\frac{\partial w}{\partial\nu}=0,\ \ &x\in\partial\Omega,\ t>0,\\[1mm]
  u(x,0)=u_0(x),\ w(x,0)=w_0(x),\ &x\in\Omega.
 \end{array}\right.\label{1.4}
 \ees
The boundedness criterion in \cite[Lemma 3.2]{BBTW} says that, the local solution of system \eqref{1.4} exists globally and remain bounded provided the $L^p$ regularity of $u$ with $p>\frac{N}{2}$ is uniformly bounded. Whereas, our boundedness criterion says that the condition can be replaced by the space-time $L^p$ regularity of $u$ with $p>\frac{N}{2}+1$. 

For the Keller-Segel chemotaxis model, the classical logistic source can prevent blow-up in two space dimension (\cite{osaki-na2002,xiang-jde2015}). Is this still true for \eqref{1.1}? Unfortunately, the conclusion in Theorem \ref{t1.2} holds for $\alpha=2$ and $\beta>2$. For the case $\beta=2$, we have to leave this problem in the future.

We next investigate the large time behavior of the solution obtained in the above theorems and see how the taxis mechanisms and logistic damping rates affect the large time behavior. Clearly, the nutrient supply $r$ is critical for the large time behavior of $w$. We shall consider two cases:
\begin{itemize}
\item Fast decaying resupply of nutrient, i.e.,
\bes
\int_0^\yy\ii r(x,t){\rm d}x{\rm d}t<\yy. \label{5.1a}
\ees
\item $r$ is a positive constant.
\end{itemize}
We define
\bess
\ub=\kk(\frac{a_1}{b_1}\rr)^{\frac{1}{\alpha-1}},\ \ \vb=\kk(\frac{a_2}{b_2}\rr)^{\frac{1}{\beta-1}}
\eess
and
\bess
\wb=\frac{r}{\ub+\vb+\mu}\ \ {\rm if}\ r\ {\rm is\ a\ positive\ constant}.
\eess

In the case that $r$ has fast decaying property, without any additional restrictions on the parameters, we show the large time behavior.
\begin{theo}\label{t1.3}
Let $\alpha,\beta>\frac{N}2+1$ when $N>2$, and $\alpha\ge2$ with $\beta>2$ when $N=2$. Suppose that $r$ satisfies \eqref{1.2} and \eqref{5.1a}. Then
\bess
\|u(\cdot,t)-\ub\|_{L^\yy(\oo)}+\|v(\cdot,t)-\vb\|_{L^\yy(\oo)}+\|w(\cdot,t)\|_{L^\yy(\oo)}\rightarrow0\ \ as\ \ t\rightarrow\yy.
\eess
Furthermore, if $r$ satisfies \eqref{1.2} and
\bess
\ii r(\cdot,t)\le Ke^{-\delta t}\ \ for\ all\ t>0
\eess
for some $K,\delta>0$, then there exist $\lm,C>0$ such that
\bess
\|u(\cdot,t)-\ub\|_{L^\yy(\oo)}+\|v(\cdot,t)-\vb\|_{L^\yy(\oo)}+\|w(\cdot,t)\|_{L^\yy(\oo)}\le Ce^{-\lm t}\ \ for\ all\ t>0.
\eess
\end{theo}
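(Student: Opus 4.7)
The overall strategy is a cascade driven by the decaying nutrient supply: first $r\to0$ (in the integrated sense) forces $w\to0$ in $\lo$; this kills the taxis drift in the $u$--equation, so the logistic restoring mechanism pulls $u$ to $\ub$; the resulting decay of $\nn u$ then kills the taxis drift in the $v$--equation, and $v$ is pulled to $\vb$ by the same mechanism. When the input decays exponentially, each step inherits an exponential rate. All convergences will first be obtained in $L^2(\oo)$ and then upgraded to $\lo$ by interpolation against the uniform H\"older estimates that follow from the boundedness given by Theorems~\ref{t1.1}--\ref{t1.2}.

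\textbf{Step 1 (decay of $w$).} Since $u,v\ge0$, the third equation of \eqref{1.1} implies $w_t\le\tr w-\mu w+r$. Representing $w$ via the Neumann heat semigroup $(e^{t\tr})_{t\ge0}$ and using the standard $L^p$--$L^\yy$ smoothing estimates, together with \eqref{5.1a}, I would obtain $\|\wq\|_{\lo}\to0$ as $t\to\yy$ (the heat-semigroup term in $w_0$ decays via the $e^{-\mu t}$ factor, while the Duhamel contribution vanishes because $r\in L^1(\oo\times(0,\yy))$). In the exponential-decay case, splitting the Duhamel integral on $[0,t/2]$ and $[t/2,t]$ yields $\|\wq\|_{\lo}\le Ce^{-\lm_1t}$ for any $\lm_1<\min\{\mu,\delta\}$. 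Parabolic $W^{1,q}$ regularity then produces the corresponding decay (or exponential decay) of $\|\nn w(\cdot,t)\|_{L^q(\oo)}$.

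\textbf{Step 2 (decay of $u$ to $\ub$).} Testing the first equation with $u-\ub$ and using $a_1\ub-b_1\ub^\alpha=0$ gives
\bess
\frac12\dv\ii(u-\ub)^2+\ii|\nn u|^2+b_1\ii u(u^{\alpha-1}-\ub^{\alpha-1})(u-\ub)=\chi\ii u\nn w\cd\nn u.
\eess
The third term on the left is nonnegative by monotonicity of $s\mapsto s^{\alpha-1}$, and by the uniform $\lo$--bound already available it satisfies $u(u^{\alpha-1}-\ub^{\alpha-1})(u-\ub)\ge c_1(u-\ub)^2$ for some $c_1>0$ on the bounded range of $u$. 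Absorbing the right-hand side via Young's inequality as $\frac12\ii|\nn u|^2+\frac{\chi^2}{2}\ii u^2|\nn w|^2$, I arrive at
\bess
\dv\ii(u-\ub)^2+\ii|\nn u|^2+2c_1b_1\ii(u-\ub)^2\le C\|\nn w(\cdot,t)\|_{L^2(\oo)}^2.
\eess
Gronwall combined with Step 1 delivers $\|\uj-\ub\|_{L^2(\oo)}\to0$ (with exponential rate in the second scenario). Interpolation with the uniform H\"older bound on $u$ upgrades this to the $\lo$--statement, and a further parabolic regularity argument transfers the decay to $\|\nn u(\cdot,t)\|_{L^2(\oo)}$, which is needed for the next step.

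\textbf{Step 3 (decay of $v$ to $\vb$) and main obstacle.} The same test with $(v-\vb)$ on the second equation produces the cross term $\xi\ii v\nn u\cd\nn v$, which is absorbed by the now-available decay of $\nn u$ in $L^2(\oo)$ from Step 2; a Gronwall estimate analogous to that for $u$ then gives $\|\vm-\vb\|_{L^2(\oo)}\to0$ at the corresponding rate, which is finally lifted to $\lo$ via H\"older interpolation. The real difficulty in this whole scheme is the cross-diffusion term $\chi\ii u\nn w\cd\nn u$ (and its $v$-analogue): it forces one to produce quantitative decay not merely of $w$ but also of $\nn w$ (respectively $\nn u$), so the parabolic smoothing of the $w$--equation (respectively the $u$--equation) must be exploited carefully. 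Obtaining these gradient estimates with the correct exponential rate for the second part of the theorem is what I expect to be the most technical point.
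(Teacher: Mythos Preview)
Your cascade strategy differs from the paper's proof, which couples all three components into a single entropy-type Lyapunov functional
\[
\mathcal E(t)=\ii\Big(u-\ub-\ub\ln\frac{u}{\ub}\Big)+\frac{\ub}{\xi^2\vb M_u^2}\ii\Big(v-\vb-\vb\ln\frac{v}{\vb}\Big)+\frac{\chi^2\ub}{4}\ii w^2
\]
and shows $\mathcal E'(t)\le-\mathcal F(t)+C\ii r$, concluding by Barbalat's lemma (and by a direct Gronwall argument once $\mathcal E\le C\mathcal F$ for the exponential case). The point of testing with $1-\ub/u$ instead of $u-\ub$ is that the resulting reaction dissipation is $-b_1\ii(u-\ub)(u^{\alpha-1}-\ub^{\alpha-1})$, and the elementary inequality $(u-\ub)(u^{\alpha-1}-\ub^{\alpha-1})\ge\ub^{\alpha-2}(u-\ub)^2$ holds for \emph{all} $u\ge0$ when $\alpha\ge2$, with no lower bound on $u$ required. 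This also lets the paper bypass any gradient-decay cascade: the cross terms $\chi\ii\frac{\nn u\cdot\nn w}{u}$ and $\xi\ii\frac{\nn v\cdot\nn u}{v}$ are absorbed directly by the good terms $\ii\frac{|\nn u|^2}{u^2}$ and $\ii|\nn w|^2$ coming from the functional itself.

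Your Step~2 has a genuine gap precisely at the dissipation estimate. The claim that $u(u^{\alpha-1}-\ub^{\alpha-1})(u-\ub)\ge c_1(u-\ub)^2$ for some $c_1>0$ ``on the bounded range of $u$'' is false: as $u\to0^+$ the left-hand side tends to $0$ (because of the prefactor $u$), while the right-hand side tends to $c_1\ub^2>0$. An \emph{upper} bound on $u$ is irrelevant here; what you would need is a uniform positive lower bound $u\ge\ep>0$, which is not established anywhere in your sketch. Such a lower bound can in fact be obtained, but only after Step~1 and only through additional work: once $\|w(\cdot,t)\|_{\lo}\to0$, interpolation against the uniform $C^{2+\theta}$ bound gives $\|\tr w(\cdot,t)\|_{\lo}\to0$, and a minimum-principle ODE comparison for $m(t)=\min_x u(x,t)$ then eventually yields $m(t)\ge\ep$. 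Without inserting this step (and its analogue for $v$ before Step~3), your Gronwall inequality does not close. Switching to the entropy test function $1-\ub/u$ removes the difficulty outright and essentially leads you back to the paper's argument.
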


If $r$ is a positive constant, the large time behavior reads as follows.
\begin{theo}\label{t1.4}
Let $\alpha,\beta>\frac{N}2+1$ when $N>2$, and $\alpha\ge2$ with $\beta>2$ when $N=2$. Assume that $r$ is a positive constant. Then there exist $\tilde\chi,\tilde\xi,\tilde b_1,\tilde b_2>0$ such that, once either
\bess\chi<\tilde\chi\eess
or
\bess b_1>\tilde b_1\ \  and\ \  b_2>\tilde b_2\eess
or
\bess b_1>\tilde b_1\ \  and \ \ \xi<\tilde \xi,\eess
one has
\bess
\|u(\cdot,t)-\ub\|_{L^\yy(\oo)}+\|v(\cdot,t)-\vb\|_{L^\yy(\oo)}+\|w(\cdot,t)-\wb\|_{L^\yy(\oo)}\le Ce^{-\lm t}\ \ for\ all\ t>0
\eess
for some $C,\lm>0$.
\end{theo}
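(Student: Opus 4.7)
The strategy is a Lyapunov-functional argument organised around the fact that $(\ub,\vb,\wb)$ is singled out by the steady-state identities $a_1=b_1\ub^{\ap-1}$, $a_2=b_2\vb^{\beta-1}$, and $r=(\ub+\vb+\mu)\wb$. I would work with the mixed entropy-$L^2$ functional
\[
F(t)=\ii\bbb(u-\ub-\ub\ln\tfrac{u}{\ub}\bbb)\dx+A\ii\bbb(v-\vb-\vb\ln\tfrac{v}{\vb}\bbb)\dx+\tfrac{B}{2}\ii(w-\wb)^2\dx,
\]
with weights $A,B>0$ to be chosen depending on the parameters. Theorems \ref{t1.1}--\ref{t1.2} furnish a global classical solution bounded uniformly in time, say $\|u\|_{\lo}, \|v\|_{\lo}, \|w\|_{\lo}\le M$; this bound will be invoked freely to interchange $\ii|\nn u|^2$ with $\ii|\nn u|^2/u^2$ up to the constant $M^2$, and analogously for $v$.

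Differentiating $F$ along \eqref{1.1} and integrating by parts, the steady-state identities cancel the linear parts of the logistic and consumption terms and produce a core nonnegative dissipation consisting of the diffusion pieces $\ub\ii|\nn u|^2/u^2$, $A\vb\ii|\nn v|^2/v^2$, $B\ii|\nn w|^2$, the $w$-relaxation term $B\mu\ii(w-\wb)^2$ (after discarding the favourable $(u+v)(w-\wb)^2$ contribution), and the logistic dissipations $b_1\ii(u-\ub)(u^{\ap-1}-\ub^{\ap-1})$ and $Ab_2\ii(v-\vb)(v^{\beta-1}-\vb^{\beta-1})$. Against these stand four indefinite cross terms: the forager-taxis coupling $\chi\ub\ii u^{-1}\nn u\cd\nn w$, the exploiter-taxis coupling $A\xi\vb\ii v^{-1}\nn u\cd\nn v$, and the two consumption couplings $B\wb\ii(u-\ub)(w-\wb)$ and $B\wb\ii(v-\vb)(w-\wb)$. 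Since $\ap,\beta>1$ and $u,v\in[0,M]$, the pointwise bounds $(u-\ub)(u^{\ap-1}-\ub^{\ap-1})\ge c(u-\ub)^2$ and its $v$-analogue upgrade the logistic dissipations into effective $L^2$-dissipations of $u-\ub$ and $v-\vb$.

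The main obstacle, and the only step where the three alternative conditions enter, is the absorption of the four cross terms by Young's inequality while tracking every coefficient in terms of $\chi,\xi,A,B,b_1,b_2,M$. Each application of Young produces a residual in one of the dissipative buckets $\ii|\nn u|^2/u^2$, $\ii|\nn v|^2/v^2$, $\ii|\nn w|^2$, $\ii(w-\wb)^2$, $\ii(u-\ub)^2$ or $\ii(v-\vb)^2$, and one must arrange positive net coefficients on each. The resulting algebraic system admits a valid choice of $A,B$ precisely when either (i) $\chi$ is so small that $\chi^2\ub$ lies below the admissible range for $B$ opened up by the consumption absorption, or (ii) $b_1,b_2$ are large enough for the logistic dissipations to swallow the consumption residuals without constraining $B$ from above, or (iii) $b_1$ is large while $\xi$ is small, so that $b_1$ handles the $u$-consumption residual and smallness of $\xi$ directly controls the exploiter-taxis coupling independently of $A$. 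The three admissible regimes produce the thresholds $\tilde\chi,\tilde b_1,\tilde b_2,\tilde\xi$ of the statement.

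Once the weights are fixed, one arrives at $\dv F(t)\le-\eta_0\ii[(u-\ub)^2+(v-\vb)^2+(w-\wb)^2]\dx$ for some $\eta_0>0$. To convert this into exponential decay of $F$ itself one needs the matching upper bound $F(t)\le C_\star\ii[(u-\ub)^2+(v-\vb)^2+(w-\wb)^2]\dx$, which fails whenever $u$ or $v$ approaches zero. I would bridge this by first integrating the differential inequality in time to extract qualitative $L^2$-convergence $u\to\ub$, $v\to\vb$, then combining this with the uniform $L^\yy$-bound from Theorems \ref{t1.1}--\ref{t1.2} and standard parabolic/semigroup smoothing to upgrade to uniform convergence, thereby securing a waiting time $t_*$ beyond which $u,v\ge c_0>0$. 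After $t_*$ the entropy densities obey $u-\ub-\ub\ln(u/\ub)\le C(u-\ub)^2$ (and similarly for $v$), and Gronwall applied to the differential inequality for $F$ yields $F(t)\le F(t_*)e^{-\lm(t-t_*)}$. A final Moser--Alikakos or semigroup iteration, patterned on the boundedness proof underlying Theorems \ref{t1.1}--\ref{t1.2}, promotes this exponential $L^2$-decay to the claimed exponential $L^\yy$-decay of $u-\ub$, $v-\vb$, and $w-\wb$, with the short initial interval $[0,t_*]$ absorbed into the constant $C$.
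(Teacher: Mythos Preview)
Your Lyapunov strategy is the same one the paper uses, and the endgame (integrate to get $L^2$ convergence, upgrade to $L^\infty$ via uniform $C^{2+\theta}$ bounds and Gagliardo--Nirenberg, then use the Taylor comparison $u-\ub-\ub\ln(u/\ub)\asymp (u-\ub)^2$ after a waiting time to close Gronwall) is exactly right. Two differences worth noting. First, the paper uses the entropy $\int_\Omega(w-\wb-\wb\ln\frac{w}{\wb})$ for $w$ rather than your quadratic $\frac{B}{2}\int_\Omega(w-\wb)^2$; both produce the same $|\nabla w|^2$ and $(w-\wb)^2$ dissipations (modulo constants depending on $\|w\|_{L^\infty}$), so this is only cosmetic.

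The substantive gap is in the parameter bookkeeping behind the three alternatives. You invoke a single bound $\|u\|_{L^\infty}\le M$ from Theorems~\ref{t1.1}--\ref{t1.2} without tracking how $M$ depends on $\chi,\xi,b_1,b_2$. The paper does this explicitly (Lemma~5.0): the bound $M_u$ on $u$ is a concrete function of $\chi,b_1,b_2$ and is \emph{independent of $\xi$}. This independence is precisely what drives case~(iii). The exploiter-taxis residual forces $A\le \frac{\ub}{\xi^2\vb M_u^2}$, and with this maximal choice the $v$-dissipation coefficient becomes $\frac{b_2\ub\vb^{\beta-3}}{\xi^2 M_u^2}$; since $M_u$ does not react to $\xi$, sending $\xi\to 0$ makes this coefficient large enough to absorb the $v$-consumption residual (which is fixed by $\chi$). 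Your phrasing ``smallness of $\xi$ directly controls the exploiter-taxis coupling independently of $A$'' misidentifies the mechanism: it is not that any fixed $A$ will do, but that small $\xi$ \emph{together with} $M_u$ being $\xi$-free allows $A$ to be taken large. Without establishing the $\xi$-independence of the $u$-bound, the case~(iii) verification is incomplete. For cases (i) and (ii) one similarly needs the explicit $\chi$- and $b_1,b_2$-dependence of $M_u$, which the paper extracts by threading constants through Lemmas~2.2, 3.1, 4.1.
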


The above theorem shows that, the solution will eventually reach a co-existence homogeneous steady state provided that $\chi$ is small enough, or the logistic damping rates $b_1$ and $b_2$ are sufficiently large, or the logistic damping rate $b_1$ is large and $\xi$ is small enough. It seems a little surprising that the large time behavior holds for small $\chi$, without any small condition for $\xi$.

\section{Existence and uniqueness of local solutions, some preliminaries}
\setcounter{equation}{0} {\setlength\arraycolsep{2pt}

The following lemma asserts the local-in-time existence of the classical solutions to \eqref{1.1}.

\begin{lem}\label{l2.1}\, Let $N\ge1$ and $\alpha,\beta>1$. Then there exist $\ty\in(0,\yy]$ and nonnegative functions
\bess
 u,v,w\in \bigcap_{q>n}C^0([0,\ty);W^{1,q}(\oo))\cap C^{2,1}(\bar\oo\times(0,\ty))
\eess
which solves \eqref{1.1} in $(0,\ty)$ in the classical sense. Moreover, if $\ty<\infty$, then
\[ \limsup_{t\to \ty}\kk(\|u(\cdot,t)\|_{W^{1,p}(\oo)}+\|v(\cdot,t)\|_{W^{1,p}(\oo)}+\|w(\cdot,t)\|_{W^{1,p}(\oo)}\rr)=\yy\ \ { for\ all}\ p>n.
 \]
\end{lem}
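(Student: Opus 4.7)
The strategy is to combine a Banach fixed-point argument on a short time interval with a standard regularity bootstrap, and then obtain the blow-up criterion by a restart argument. Fix $q>N$ so that the embedding $W^{1,q}(\oo)\hookrightarrow L^\yy(\oo)$ is available. For parameters $R,T>0$ to be chosen, introduce
\bess
X_T:=\kk\{(u,v,w)\in\bbb(C^0([0,T];W^{1,q}(\oo))\bbb)^3\;:\;\sup_{0\le t\le T}\|(u-u_0,v-v_0,w-w_0)(t)\|_{W^{1,q}}\le R\rr\}
\eess
and define a map $\Phi$ sending $(\ut,\vt,\wt)\in X_T$ to the triple $(u,v,w)$ given in mild form by the Neumann heat semigroup acting on inhomogeneities obtained by freezing the taxis, reaction and consumption terms at $(\ut,\vt,\wt)$; for instance,
\bess
u(t)=e^{t\tr}u_0-\chi\int_0^t e^{(t-s)\tr}\nn\cd(\ut\nn\wt)\,\ds+\int_0^t e^{(t-s)\tr}(a_1\ut-b_1\ut^\alpha)\,\ds,
\eess
and analogously for $v$ and $w$, with the $w$-component solved as a linear heat equation with coefficient $\ut+\vt+\mu$ and source $r$.

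The key ingredients in showing that $\Phi$ is a self-map of, and a contraction on, the closed ball in $X_T$ for $T=T(R)$ sufficiently small, are the standard Neumann heat semigroup smoothing estimates in $W^{1,q}$ together with the embedding $W^{1,q}\hookrightarrow L^\yy$. The embedding furnishes uniform $L^\yy$ control of $\ut,\vt,\wt$, and hence $L^q$ control of the products $\ut\nn\wt$, $\vt\nn\ut$, $(\ut+\vt)\wt$ and of the polynomial nonlinearities $\ut^\alpha,\vt^\beta$; the singular powers $t^{-\sa}$ with $\sa<1$ arising when the semigroup acts on $\nn\cd(\cdot)$ are integrable, contributing factors of the form $T^{1-\sa}$ that can be made small. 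Nonnegativity of the resulting fixed point is then obtained from the parabolic maximum principle applied to each scalar equation, since $r\ge0$, $u_0,v_0,w_0\ge0$ and the reaction terms vanish on the relevant coordinate hyperplanes. Classical $C^{2,1}$-regularity in $\bo\times(0,\ty)$ follows by a bootstrap: parabolic $L^p$-theory upgrades the $L^\yy$ bounds to Hölder regularity of $u,v,w$ and their gradients, after which Schauder estimates applied successively to the three scalar linear equations yield the claimed smoothness.

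For the extensibility criterion, argue by contradiction. If $\ty<\yy$ and $\limsup_{t\nearrow\ty}(\|\uj\|_{W^{1,p}}+\|\vm\|_{W^{1,p}}+\|w(\cd,t)\|_{W^{1,p}})<\yy$ for some $p>N$, then since the local existence time $T(R)$ furnished by the contraction depends only on the $W^{1,q}$-size of the initial data (with $q=p$), we may restart the problem at $t_0=\ty-\eta$ for small $\eta>0$ with the same $R$, producing a solution on $[t_0,t_0+T(R)]$ that extends past $\ty$ and contradicts maximality. The principal obstacle throughout is the genuinely doubly cross-diffusive term $-\xi\nn\cd(v\nn u)$: unlike in classical Keller--Segel or chemotaxis-consumption problems, the ``chemical'' gradient $\nn u$ here is itself an unknown evolving under a chemotaxis equation, so the system admits no triangular decoupling and the estimates for $u$ and $v$ must be closed simultaneously in $W^{1,q}$. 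It is precisely the choice $q>N$, which ensures $\vt\in L^\yy$ and thus $\vt\nn\ut\in L^q$, that makes this simultaneous closure possible.
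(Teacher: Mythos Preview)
The paper's own proof is a two-line citation: local existence is attributed to Amann's abstract theory for quasilinear parabolic systems, and nonnegativity to the maximum principle, with a reference to the corresponding lemma in Tao--Winkler (2019). Amann's framework treats \eqref{1.1} as a system $z_t=\nn\cd(a(z)\nn z)+f(z)$ with $z=(u,v,w)$ and a lower-triangular matrix $a(z)$ whose diagonal is the identity; local solvability in $W^{1,q}$ for $q>N$ and the blow-up alternative then come for free from general theorems, with no system-specific estimates needed. Your direct semigroup contraction is a legitimate and more transparent alternative, and your identification of the doubly cross-diffusive term $-\xi\nn\cd(v\nn u)$ as the structural obstruction is exactly right.

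There is, however, one point where your sketch would fail as written. Closing the fixed point in $C^0([0,T];W^{1,q})$ requires controlling $\|\nn e^{t\tr}\nn\cd g\|_{L^q}$ for $g=\ut\nn\wt$ or $g=\vt\nn\ut$; with $g$ merely in $L^q$ this yields a $t^{-1}$ singularity, not $t^{-\sa}$ with $\sa<1$, and the time integral diverges. For the $u$-component your own device of solving the $w$-equation first as a genuine linear problem supplies enough extra regularity on $\nn w$ to repair this, but for the $v$-component $\nn\ut$ carries only the $L^q$ bound furnished by the fixed-point space itself, and no analogous decoupling is available. Standard remedies---a time-weighted norm that tolerates the singularity at $t=0$, or running the contraction in H\"older classes from the outset---do work here, but one of them must be invoked explicitly. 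This bookkeeping is precisely what Amann's machinery is designed to absorb, which is why the paper opts for the citation.
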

\begin{proof}
The local existence of the classical solution can be obtained by using Amann's theory, and the nonnegativity of the solution is deduced by the maximum principle (cf. \cite[Lemma 2.1]{TaoW-2019-forager}).
\end{proof}

We set from now on that $\tau=\min\{\frac{\ty}{2},1\}$ and $\tyy=\ty-\tau$. The following gives the $L^\yy$ bound for $w$ and some basic regularities for $u$ and $v$.
\begin{lem}\label{l2.2}
Let $N\ge1,\alpha,\beta>1$ and $M=\|w_0\|_{L^\yy(\oo)}+\frac{r_*}{\mu}$. Then
\bes
\|w(\cdot,t)\|_{L^\yy(\oo)}\le M\ \ for\ \ all\ t\in(0,\ty)\label{2.2}
\ees
and
\bes
\ii u\le M_1\ \ and\ \ \ii v\le M_2\ \ for\ \ all\ t\in(0,\ty) \label{2.3}
\ees
where $M_1=\ii u_0+|\oo|\kk(\frac{a_1}{b_1}\rr)^{\frac1{\alpha-1}}$ and $M_2=\ii v_0+|\oo|\kk(\frac{a_2}{b_2}\rr)^{\frac1{\beta-1}}$. Moreover, there exist $C=C(|\oo|,\ii u_0,a_1,\alpha)>0$ and $C'=C'(|\oo|,\ii v_0,a_2,\beta)>0$ such that
\bes
\int_t^{t+\tau}\ii u^{\alpha}\le Cb_1^{-1}(b_1^{-\frac1{\alpha-1}}+1)\ \ and\ \ \int_t^{t+\tau}\ii v^{\beta}\le C'b_2^{-1}(b_2^{-\frac1{\beta-1}}+1)\ \ for\ \ all\ t\in(0,\tyy). \label{2.4}
\ees
\end{lem}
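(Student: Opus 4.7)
The plan is to prove the three estimates in order, treating each via a different standard technique: pointwise comparison for $w$, an ODE comparison for the integrals of $u,v$, and time-integration of those same ODEs for the space-time $L^\alpha$ and $L^\beta$ bounds.

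First, I would establish \eqref{2.2}. Since $u,v\ge 0$ by Lemma \ref{l2.1}, the third equation of \eqref{1.1} gives the pointwise differential inequality $w_t\le \tr w-\mu w+r_*$. Using homogeneous Neumann boundary conditions, the constant function $y(t)\equiv \max\{\|w_0\|_\lo,r_*/\mu\}$ is a supersolution, so the parabolic maximum principle yields $w(\cdot,t)\le \|w_0\|_\lo+r_*/\mu = M$. (Equivalently, compare with the solution of $y'=-\mu y+r_*$ with $y(0)=\|w_0\|_\lo$.)

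Next, I would derive the mass bounds \eqref{2.3}. Integrating the $u$-equation over $\oo$ and using that the divergence terms vanish by the Neumann boundary conditions gives
\bess
\dv\ii u = a_1\ii u - b_1\ii u^\alpha.
\eess
By Jensen's inequality (or H\"older), $\ii u^\alpha \ge |\oo|^{1-\alpha}(\ii u)^\alpha$, so the scalar function $y(t):=\ii u$ satisfies $y'\le a_1 y - b_1|\oo|^{1-\alpha}y^\alpha$. Comparison with the associated ODE whose positive equilibrium is $|\oo|(a_1/b_1)^{1/(\alpha-1)}$ gives $\ii u\le \ii u_0+|\oo|(a_1/b_1)^{1/(\alpha-1)}=M_1$. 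The bound on $\ii v$ is identical.

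Finally, for the space-time bounds \eqref{2.4}, I would time-integrate the identity above on $(t,t+\tau)$ to get
\bess
b_1\int_t^{t+\tau}\ii u^\alpha = \ii u(\cdot,t)-\ii u(\cdot,t+\tau)+a_1\int_t^{t+\tau}\ii u \le M_1+a_1\tau M_1 \le (1+a_1)M_1,
\eess
since $\tau\le 1$. Dividing by $b_1$ and expanding $M_1$ produces one term proportional to $b_1^{-1}$ and one proportional to $b_1^{-1-1/(\alpha-1)}$, which collects into $Cb_1^{-1}(1+b_1^{-1/(\alpha-1)})$ with $C=C(|\oo|,\ii u_0,a_1,\alpha)$; the same argument yields the $v$-estimate.

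There is really no obstacle here, as all three pieces reduce to elementary pointwise or ODE comparisons once one observes that the taxis terms $\nn\cd(u\nn w)$ and $\nn\cd(v\nn u)$ integrate to zero under Neumann conditions. The only subtle point is being careful that the constants $C,C'$ are permitted to depend on $|\oo|,\ii u_0,\ii v_0,a_1,a_2,\alpha,\beta$ but \emph{not} on $b_1,b_2$, so that the explicit $b_i$-dependence displayed in \eqref{2.4} is preserved; this is why the final arithmetic step is written in the form $b_1^{-1}(b_1^{-1/(\alpha-1)}+1)$ rather than being absorbed into $C$.
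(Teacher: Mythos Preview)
Your proposal is correct and follows essentially the same route as the paper: the paper cites \cite[Lemma 2.2]{TaoW-2019-forager} for \eqref{2.2} (which is precisely the comparison argument you describe), derives the same ODE inequality $y'\le a_1y-b_1|\oo|^{1-\alpha}y^\alpha$ via H\"older to obtain \eqref{2.3}, and then time-integrates that relation on $(t,t+\tau)$ together with the mass bound to extract \eqref{2.4}. Your care in isolating the $b_1$-dependence matches the paper's intent exactly.
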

\begin{proof}
The inequality \eqref{2.2} has been proven in \cite[Lemma 2.2]{TaoW-2019-forager}.

A simple use of H\"{o}lder's inequality provides $|\oo|^{1-\alpha}(\ii u)^\alpha\le \ii u^\alpha$. We then integrate the first equation in \eqref{1.1} over $\Omega$ to get
\bes
\dv\int_\Omega u&=&a_1\int_\Omega u-b_1\int_\Omega u^\alpha\nm\\
&\le&a_1\int_\Omega u-b_1|\oo|^{1-\alpha}(\ii u)^\alpha\ \ \ \ \ \ \faa\ t\in(0,\ty)\label{2.5a}
\ees
which implies
\bes
\int_\Omega u\le \ii u_0+|\oo|\kk(\frac{a_1}{b_1}\rr)^{\frac1{\alpha-1}}\ \ \ \ \ \ \faa\ t\in(0,\ty).\label{2.6a}
\ees
Integrating \eqref{2.5a} upon $(t,t+\tau)$ for $t\in(0,\tyy)$, and using \eqref{2.6a}, we have
\bess
\int_t^{t+\tau}\ii u^\alpha\le C_1b_1^{-1}(b_1^{-\frac1{\alpha-1}}+1)\ \ \ \ \ \ \faa\ t\in(0,\tyy)
\eess
for some $C_1=C_1(|\oo|,\ii u_0,a_1,\alpha)>0$. We thus obtain the regularity properties for $u$ in \eqref{2.3} and \eqref{2.4}. The statements for $v$ in \eqref{2.3} and \eqref{2.4} can be derived by the same way.
\end{proof}

The following lemma claims that, once we get the uniform boundedness of $u$ (resp. $v$), the estimation of $\nn u$ (resp. $\nn v$) relies on $\tr w,\nn w$ (resp. $\tr u,\nn u$).

\begin{lem}\label{l2.3} Let $N\ge1$ and $\alpha,\beta>1$.

{\rm(i)} Suppose that $\|u(\cdot,t)\|_{L^\yy(\oo)}\le k_1$ for all $t\in(0,\ty)$. Then for $p>1$, there exists $C=C(N,p,|\oo|,k_1,a_1,\chi)>0$ such that
\bes
\dv\ii|\nabla u|^{2p}+\ii|\nabla u|^{2p}\le C\kk(\ii|\nabla w|^{2(p+1)}+\ii|\Delta w|^{p+1}+1\rr)\ \ \ \ \ \ for\ all\ t\in(0,\ty).\label{2.5}
\ees

{\rm(ii)} Suppose that $\|v(\cdot,t)\|_{L^\yy(\oo)}\le k_2$ for all $t\in(0,\ty)$. Then for $p>1$, there exists $C_*=C_*(N,p,|\oo|,k_2,a_2,\xi)>0$ such that
\bes
\dv\ii|\nabla v|^{2p}+\ii|\nabla v|^{2p}\le C_*\kk(\ii|\nabla u|^{2(p+1)}+\ii|\Delta u|^{p+1}+1\rr)\ \ \ \ \ \ for\ all\ t\in(0,\ty).\label{2.6}
\ees
\end{lem}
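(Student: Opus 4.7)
The plan is to carry out a weighted gradient estimate separately on the $u$-equation (for part (i)) and then mirror the argument on the $v$-equation (for part (ii)), the only substantive difference being that in part (i) the taxis term reads $-\chi\nabla\cdot(u\nabla w)$ while in part (ii) it reads $-\xi\nabla\cdot(v\nabla u)$. I will focus on (i).

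First I would test the first equation in \eqref{1.1} with $|\nabla u|^{2p-2}\nabla u$ in the $H^1$-sense: differentiate, take the inner product with $|\nabla u|^{2p-2}\nabla u$, and integrate over $\Omega$. Using the pointwise identity $\nabla u\cdot\nabla\Delta u=\tfrac12\Delta|\nabla u|^2-|D^2u|^2$ and an integration by parts under the Neumann condition, one is led to an expression of the form
\begin{equation*}
\frac{1}{2p}\dv\ii|\nabla u|^{2p}+\ii|\nabla u|^{2p-2}|D^2 u|^2+\frac{p-1}{2}\ii|\nabla u|^{2p-4}\bigl|\nabla|\nabla u|^2\bigr|^2+b_1\alpha\ii u^{\alpha-1}|\nabla u|^{2p}=I_{\rm bdry}+I_{\rm tax}+a_1\ii|\nabla u|^{2p},
\end{equation*}
where $I_{\rm bdry}=\tfrac12\ip|\nabla u|^{2p-2}\frac{\partial|\nabla u|^2}{\partial\nu}$ and $I_{\rm tax}=\chi\ii\nabla\cdot\bigl(|\nabla u|^{2p-2}\nabla u\bigr)\,\nabla\cdot(u\nabla w)$. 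The logistic term gives a nonpositive contribution which is just dropped.

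Next I would estimate $I_{\rm tax}$ by expanding both divergences, using $|\nabla\cdot(|\nabla u|^{2p-2}\nabla u)|\lesssim|\nabla u|^{2p-2}|D^2 u|$ and $|\nabla\cdot(u\nabla w)|\le|\nabla u||\nabla w|+k_1|\Delta w|$, and then applying Young's inequality twice to obtain
\begin{equation*}
|I_{\rm tax}|\le\tfrac12\ii|\nabla u|^{2p-2}|D^2 u|^2+C_1\ii|\nabla u|^{2p}|\nabla w|^2+C_1\ii|\nabla u|^{2p-2}|\Delta w|^2.
\end{equation*}
A further Young application with conjugate exponents $\tfrac{p+1}{p}$ and $p+1$ on the first cross-term, and $\tfrac{p+1}{p-1}$ and $\tfrac{p+1}{2}$ on the second, converts these into
\begin{equation*}
C_2\ii|\nabla u|^{2(p+1)}+C_3\ii|\nabla w|^{2(p+1)}+C_3\ii|\Delta w|^{p+1}.
\end{equation*}
The boundary term $I_{\rm bdry}$ I would handle in the usual Winkler-style way, bounding $\frac{\partial|\nabla u|^2}{\partial\nu}\le C_\Omega|\nabla u|^2$ on $\partial\Omega$ (valid on any smooth bounded domain) and then using the trace embedding together with a Gagliardo-Nirenberg interpolation to dominate $\ip|\nabla u|^{2p}$ by a small multiple of $\ii\bigl|\nabla|\nabla u|^p\bigr|^2$ plus a constant.

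The crucial remaining step is to remove the $\ii|\nabla u|^{2(p+1)}$ term on the right. Since $\|u\|_{L^\yy(\oo)}\le k_1$, an integration by parts gives
\begin{equation*}
\ii|\nabla u|^{2(p+1)}=-\ii u\,\nabla\cdot\bigl(|\nabla u|^{2p}\nabla u\bigr)\le C_4\,\ii u|\nabla u|^{2p}|D^2 u|\le \epsilon\ii|\nabla u|^{2(p+1)}+C_5\ii|\nabla u|^{2p-2}|D^2 u|^2,
\end{equation*}
so that $\ii|\nabla u|^{2(p+1)}\le C_6\ii|\nabla u|^{2p-2}|D^2 u|^2$. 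The same reasoning, followed by Hölder on $\Omega$, gives $\ii|\nabla u|^{2p}\le \tfrac12\ii|\nabla u|^{2p-2}|D^2 u|^2+C_7$, which lets me both absorb the remaining $a_1\ii|\nabla u|^{2p}$ and add a free copy of $\ii|\nabla u|^{2p}$ on the left-hand side at the cost of a constant. Collecting everything produces \eqref{2.5}. Part (ii) follows by repeating the whole computation on the $v$-equation, with $\xi$, $\ii|\nabla u|^{2(p+1)}$, $\ii|\Delta u|^{p+1}$ in place of $\chi$, $\ii|\nabla w|^{2(p+1)}$, $\ii|\Delta w|^{p+1}$.

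I expect the main technical obstacle to be the bookkeeping in the cross-term step: one has to choose the Young exponents so that the $w$-factor ends up with exactly the exponents $2(p+1)$ and $p+1$ demanded by the statement, and then check that the remaining $u$-factor is of the form $|\nabla u|^{2(p+1)}$ — i.e.~of exactly the integrability class that the $L^\yy$-bound on $u$, via the integration-by-parts identity above, can reduce to the dissipation $\ii|\nabla u|^{2p-2}|D^2 u|^2$. Once that matching is done, the boundary term and the logistic term cause no further trouble.
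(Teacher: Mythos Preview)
Your proposal is correct and follows essentially the same route as the paper: differentiate $\int_\Omega|\nabla u|^{2p}$, split into the Laplacian piece (producing the dissipation $\int_\Omega|\nabla u|^{2p-2}|D^2u|^2$ and a boundary term), the taxis piece $J(t)$, and the reaction piece, then use $\|u\|_{L^\infty}\le k_1$ together with the integration-by-parts identity $\int_\Omega|\nabla u|^{2(p+1)}\le C\int_\Omega|\nabla u|^{2p-2}|D^2u|^2$ to absorb both the cross term $\int_\Omega|\nabla u|^{2(p+1)}$ and the linear term $(a_1+1)\int_\Omega|\nabla u|^{2p}$ into the dissipation. The paper is terser only because it outsources the computations you spell out (the boundary estimate, the Young splitting of $J(t)$, and the absorption step) to \cite[(3.21)--(3.22)]{WW-M3AS2020}; your bookkeeping of the Young exponents and your treatment of $I_{\rm bdry}$ via the Mizoguchi--Souplet boundary inequality plus trace/Gagliardo--Nirenberg are exactly what those cited computations contain.
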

\begin{proof}
By direct computations (cf. \cite{WW-M3AS2020}), we have
\bes
&&\df1{2p}\dv\ii|\nn u|^{2p}+\int_\Omega|\nabla u|^{2p}\nonumber\\
&=&\int_\Omega|\nabla u|^{2(p-1)}\nabla u\cdot\nabla u_t+\int_\Omega|\nabla u|^{2p}\nonumber\\
&=&\int_\Omega|\nabla u|^{2(p-1)}\nabla u\cdot\nabla(\Delta u-\chi\nabla\cdot(u\nabla w)+a_1u-b_1u^\alpha)+\int_\Omega|\nabla u|^{2p}\nonumber\\
&\le&\int_\Omega|\nabla u|^{2(p-1)}\nabla u\cdot\nabla\Delta u+\chi\int_\Omega\nabla\cdot(|\nabla u|^{2(p-1)}\nabla u)(\nabla\cdot(u\nabla w))+(a_1+1)\int_\Omega|\nabla u|^{2p}\nm\\
&=:&I(t)+J(t)+(a_1+1)\int_\Omega|\nabla u|^{2p}\ \ \ \ \ \ \faa\ t\in(0,\ty). \label{2.7a}
\ees
Following the derivation of \cite[(3.21)]{WW-M3AS2020}, one can find $C_1=C_1(N,p,|\oo|,k_1,\chi)>0$ fulfilling
\bes
&&\df1{2p}\dv\int_\Omega|\nabla u|^{2p}+\int_\Omega|\nabla u|^{2p}+\df1{16}\int_\Omega|\nabla u|^{2(p-1)}|D^2u|^2\nonumber\\
&\le&C_1\int_\Omega|\nabla w|^{2(p+1)}+C_1\int_\Omega|\Delta w|^{p+1}+\kk(C_1+a_1\rr)\int_\Omega|\nabla u|^{2p}\ \ \ \ \ \ \faa\ t\in(0,\ty).\label{2.8aa}
\ees
We use the third term in the left hand side of \eqref{2.8aa} to absorb $\kk(C_1+a_1\rr)\int_\Omega|\nabla u|^{2p}$ (cf. \cite[(3.22)]{WW-M3AS2020}), it then arrives at
\bess
&&\df1{2p}\dv\int_\Omega|\nabla u|^{2p}+\int_\Omega|\nabla u|^{2p}\nonumber\\
&\le&C_2\int_\Omega|\nabla w|^{2(p+1)}+C_2\int_\Omega|\Delta w|^{p+1}+C_2\ \ \ \ \ \ \faa\ t\in(0,\ty)
\eess
for some $C_2=C_2(N,p,|\oo|,k_1,a_1,\chi)>0$. The deduction of \eqref{2.6} is similar.
\end{proof}

We then construct a relationship between $\nn w$ and $u,v$.

\begin{lem}\label{l2.4}
Let $N\ge1$. For $p\ge1$, one can find $C=C(N,p,|\oo|,\|w_0\|_{L^\yy(\oo)},r_*,\mu)>0$ such that
\bes
\dv\ii|\nn w|^{2p}+\mu\ii|\nn w|^{2p}\le C\ii u^{p+1}+C\ii v^{p+1}+C\ \ \ \ \ for\ all\ t\in(0,\ty).\label{2.7}
\ees
\end{lem}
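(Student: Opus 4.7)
The plan is to differentiate $\ii|\nn w|^{2p}$ in time and test the $w$-equation against $|\nn w|^{2(p-1)}\nn w$, following the scheme standard for chemotaxis-consumption problems (compare \cite{Taow-jde2012,BBTW}). Starting from
\bess
\frac{1}{2p}\dv\ii|\nn w|^{2p}=\ii|\nn w|^{2(p-1)}\nn w\cd\nn w_t
\eess
and substituting $w_t=\tr w-(u+v)w-\mu w+r$, one obtains four pieces: a diffusive term $\ii|\nn w|^{2(p-1)}\nn w\cd\nn\tr w$, a consumption term $-\ii|\nn w|^{2(p-1)}\nn w\cd\nn((u+v)w)$, the pure decay $-\mu\ii|\nn w|^{2p}$ which will furnish the prescribed LHS coefficient $\mu$, and the source $\ii|\nn w|^{2(p-1)}\nn w\cd\nn r$.

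For the diffusive piece I would use the pointwise identity $\nn w\cd\nn\tr w=\tfrac12\tr|\nn w|^2-|D^2 w|^2$ and integrate by parts to extract the key dissipation $-\ii|\nn w|^{2(p-1)}|D^2 w|^2$ together with a boundary integral $\tfrac12\ip|\nn w|^{2(p-1)}\pl_\nu|\nn w|^2$. The boundary piece is controlled by the smooth-domain estimate $\pl_\nu|\nn w|^2\le 2\kappa|\nn w|^2$ (Mizoguchi--Souplet), followed by a trace-Sobolev inequality applied to $|\nn w|^p$ and the pointwise bound $|\nn|\nn w||\le|D^2 w|$; this absorbs a small multiple of the dissipation and leaves a residual $C\ii|\nn w|^{2p}$ plus a constant. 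For the consumption and source pieces, one integration by parts removes boundary traces via $\pl_\nu w=0$, and inserting the uniform bounds $\|w(\cd,t)\|_\lo\le M$ from Lemma \ref{l2.2} and $\|r\|_{L^\yy}\le r_*$ reduces matters to integrals of the form $\ii(u+v+1)|\nn w|^{2(p-1)}|D^2 w|$; Cauchy--Schwarz splits off further dissipation and yields $\ii(u+v)^2|\nn w|^{2(p-1)}$ together with a lower-order $\ii|\nn w|^{2(p-1)}$.

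The crucial choice is Young's inequality with conjugate exponents $\tfrac{p+1}{2}$ and $\tfrac{p+1}{p-1}$,
\bess
(u+v)^2|\nn w|^{2(p-1)}\le\ep|\nn w|^{2(p+1)}+C_\ep(u+v)^{p+1},
\eess
which produces the target right-hand side $u^{p+1}+v^{p+1}$ at the cost of the superlinear gradient term $\ii|\nn w|^{2(p+1)}$. To absorb the latter I would invoke the auxiliary identity obtained by integrating by parts (the boundary vanishing because of $\pl_\nu w=0$) and the bound $\|w\|_\lo\le M$,
\bess
\ii|\nn w|^{2(p+1)}=-\ii w\,\nn\cd\bigl(|\nn w|^{2p}\nn w\bigr)\le C_p M\ii|\nn w|^{2p}|D^2 w|,
\eess
followed by Cauchy--Schwarz to arrive at
\bess
\ii|\nn w|^{2(p+1)}\le C_p M^2\ii|\nn w|^{2(p-1)}|D^2 w|^2.
\eess
With this inequality available, choosing all $\ep$-parameters sufficiently small absorbs every superlinear gradient term into the dissipation; the residual $\ii|\nn w|^{2p}$ is in turn dominated by $\ep\ii|\nn w|^{2(p+1)}+C_\ep|\oo|$ and absorbed by the same trick; the decay piece supplies the prescribed $\mu\ii|\nn w|^{2p}$ on the LHS; and Lemma \ref{l2.2} lets one write $M$ in terms of $\|w_0\|_\lo$ and $r_*$. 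The main obstacle is exactly the exponent matching in the consumption step: a naive application of Young would produce $u^{2p}$ instead of $u^{p+1}$, and the rescue lies in trading $|\nn w|^{2(p+1)}$ against the dissipation via the pointwise $L^\yy$-bound on $w$.
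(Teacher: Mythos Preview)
Your proposal is correct and follows essentially the same route as the paper: decompose the time derivative into diffusive, consumption, decay and source pieces, integrate by parts on the consumption and source pieces using $\pl_\nu w=0$ and $\|w\|_{L^\infty}\le M$, apply Young's inequality with exponents $\tfrac{p+1}{2},\tfrac{p+1}{p-1}$ to produce $u^{p+1}+v^{p+1}$ at the cost of $\ii|\nn w|^{2(p+1)}$, and absorb the latter into the Hessian dissipation via $\ii|\nn w|^{2(p+1)}\le C M^2\ii|\nn w|^{2(p-1)}|D^2 w|^2$. The only cosmetic difference is that the paper cites \cite{Lankeit-Wang} for this absorption inequality and \cite{WW-M3AS2020} for the diffusive/boundary estimate, whereas you derive both explicitly (the former by the integration-by-parts identity you wrote down, the latter via Mizoguchi--Souplet plus a trace inequality).
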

\begin{proof}
By straightforward computations, we have
\bes
&&\df1{2p}\dv\ii|\nn w|^{2p}+\mu\ii|\nn w|^{2p}\nonumber\\[0.5mm]
&=&\ii|\nn w|^{2(p-1)}\nn w\cdot\nn w_t+\mu\ii|\nn w|^{2p}\nonumber\\[0.5mm]
&=&\ii|\nn w|^{2(p-1)}\nn w\cdot\nn(\Delta w-(u+v)w-\mu w+r)+\mu\ii|\nn w|^{2p}\nonumber\\[0.5mm]
&=&\ii|\nn w|^{2(p-1)}\nn w\cdot\nn\Delta w-\ii|\nn w|^{2(p-1)}\nn w\cdot\nn(uw)-\ii|\nn w|^{2(p-1)}\nn w\cdot\nn(vw)\nm\\
&&+\ii|\nn w|^{2(p-1)}\nn w\cdot\nn r\nonumber\\[0.5mm]
&=:&I_1(t)+I_2(t)+I_3(t)+I_4(t)\ \ \ \ \ \ \faa\ t\in(0,\ty).\label{2.9}
\ees
In view of \cite[Lemma 2.2]{Lankeit-Wang} and \eqref{2.2}, there holds
\bes
\int_\Omega|\nabla w|^{2(p+1)}\le 2(N+4p^2)\|w\|_{L^\infty(\oo)}^2\int_\Omega|\nabla w|^{2(p-1)}|D^2w|^2\le \tk\int_\Omega|\nabla w|^{2(p-1)}|D^2w|^2\label{2.9a}
\ees
for all $t\in(0,\ty)$ with $\tk:=2(N+4p^2)M^2$ where $M$ is given by Lemma \ref{l2.2}.

Recalling \eqref{2.2}, similar to the derivation of \cite[(3.15)]{WW-M3AS2020} (or \cite[(4.14)]{WW-M3AS2020}), by adjusting some parameters, one can find $C_1=C_1(N,p,|\oo|,\|w_0\|_{L^\yy(\oo)},r_*,\mu)>0$ such that, for all $t\in(0,\ty)$,
\bes
I_1(t)&\le&-\frac{3}{4}\ii|\nn w|^{2(p-1)}|D^2w|^2-\df{p-1}{4}\ii|\nn w|^{2(p-2)}\kk|\nn|\nn w|^2\rr|^2+C_1.\label{2.8a}
\ees

Using integration by parts, Young's inequality, \eqref{2.9a} and the known inequality: $|\tr w|\le \sqrt{N}|D^2w|$, the second term on the right hand side of \eqref{2.9} can be estimated as:
\bes
I_2(t)&=&-\ii|\nn w|^{2(p-1)}\nn w\cdot\nn(uw)\nm\\
&=&(p-1)\ii uw|\nn w|^{2(p-2)}\nn|\nn w|^2\cdot\nn w+\ii uw|\nn w|^{2(p-1)}\tr w\nm\\
&\le&M(p-1)\ii u|\nn w|^{2(p-2)}\kk|\nn|\nn w|^2\rr||\nn w|+M\sqrt{N}\ii u|\nn w|^{2(p-1)}|D^2w|\nm\\
&\le&\frac{p-1}{16}\ii|\nn w|^{2(p-2)}\kk|\nn|\nn w|^2\rr|^2+\frac18\ii|\nn w|^{2(p-1)}|D^2w|^2\nm\\
&&+M^2(4(p-1)+2N)\ii u^2|\nn w|^{2(p-1)}\nm\\
&\le&\frac{p-1}{16}\ii|\nn w|^{2(p-2)}\kk|\nn|\nn w|^2\rr|^2+\frac18\ii|\nn w|^{2(p-1)}|D^2w|^2\nm\\
&&+\frac{1}{8\tk}\ii |\nn w|^{2(p+1)}+C_2\ii u^{p+1}\nm\\
&\le&\frac{p-1}{16}\ii|\nn w|^{2(p-2)}\kk|\nn|\nn w|^2\rr|^2+\frac14\ii|\nn w|^{2(p-1)}|D^2w|^2+C_2\ii u^{p+1}\label{2.10}
\ees
for some $C_2=C_2(N,p,\|w_0\|_{L^\yy(\oo)},r_*,\mu)>0$. Similarly, $I_3$ can be estimated as:
\bes
I_3(t)&=&-\ii|\nn w|^{2(p-1)}\nn w\cdot\nn(vw)\nm\\
&\le&\frac{p-1}{16}\ii|\nn w|^{2(p-2)}\kk|\nn|\nn w|^2\rr|^2+\frac14\ii|\nn w|^{2(p-1)}|D^2w|^2+C_2\ii v^{p+1}\label{2.11}
\ees
For the last term $I_4$, similar to the deduction of \eqref{2.10}, we find
\bes
I_4(t)&=&\ii|\nn w|^{2(p-1)}\nn w\cdot\nn r\nm\\
&=&-(p-1)\ii r|\nn w|^{2(p-2)}\nn|\nn w|^2\cdot\nn w-\ii r|\nn w|^{2(p-1)}\Delta w\nonumber\\[0.5mm]
&\le&r_*(p-1)\ii |\nn w|^{2(p-2)}\kk|\nn|\nn w|^2\rr||\nn w|-r_*\sqrt{N}\ii |\nn w|^{2(p-1)}|D^2 w|\nonumber\\[0.5mm]
&\le&\frac{p-1}{16}\ii|\nn w|^{2(p-2)}\kk|\nn|\nn w|^2\rr|^2+\frac18\ii|\nn w|^{2(p-1)}|D^2w|^2+r_*^2(4(p-1)+2N)\ii|\nn w|^{2(p-1)}\nm\\
&\le&\frac{p-1}{16}\ii|\nn w|^{2(p-2)}\kk|\nn|\nn w|^2\rr|^2+\frac18\ii|\nn w|^{2(p-1)}|D^2w|^2+\frac{1}{8\tk}\ii|\nn w|^{2(p+1)}+C_3\nm\\
&\le&\frac{p-1}{16}\ii|\nn w|^{2(p-2)}\kk|\nn|\nn w|^2\rr|^2+\frac14\ii|\nn w|^{2(p-1)}|D^2w|^2+C_3\label{2.12}
\ees
for all $t\in(0,\ty)$ for some $C_3=C_3(N,p,\|w_0\|_{L^\yy(\oo)},r_*,\mu)>0$. Plugging \eqref{2.8a}, \eqref{2.10}-\eqref{2.12} into \eqref{2.9}, one can find $C_4=C_4(N,p,|\oo|,\|w_0\|_{L^\yy(\oo)},r_*,\mu)>0$ such that
\bess
\df1{2p}\dv\ii|\nn w|^{2p}+\mu\ii|\nn w|^{2p}\le C_4\ii u^{p+1}+C_4\ii v^{p+1}+C_4\ \ \ \ \ \ \faa\ t\in(0,\ty).
\eess
This completes the proof.
\end{proof}

We collect an important lemma from \cite{WW-M3AS2020,Lou-Winkler,Black-M3AS2020}.
\begin{lem}\label{l2.5}
Let $N\ge1$, $T>0$ and $\theta=\min\{1,\frac{T}{2}\}$. Suppose that for some $p>1$ and $K,H>0$,
\bes
\int_t^{t+\theta}\ii |f|^p\le K\ \ and\ \ \int_t^{t+\theta}\ii |z|^p\le H\ \ \ \ \ for\ all\ t\in(0,T-\theta),\label{2.22c}
\ees
and $z\in C^{2,1}(\bar\oo\times(0,T))$ solves
\bess
 \left\{\begin{array}{lll}
 z_t=\tr z+f(x,t),&x\in\Omega,\ \ 0<t<T,\\[1mm]
 \frac{\partial z}{\partial\nu}=0,\ \ &x\in\partial\Omega,\ 0<t<T,\\[1mm]
  z(x,0)=z_0(x),\ &x\in\Omega,
 \end{array}\right.
 \eess
where $z_0\in W^{2,\yy}(\oo)$ with $z_0\ge 0$ and $\frac{\partial z_0}{\partial\nu}=0$ on $\pl\oo$. Then, there is $C>0$ fulfilling
\bes
\int_t^{t+\theta}\|z_t\|_{L^p(\oo)}^{p}+\int_t^{t+\theta}\|z\|_{W^{2,p}(\oo)}^{p}\le C(\|z_0\|_{W^{2,p}(\oo)}^{p}+K+H)\ \ \ \ \ for\ all\ t\in(0,T-\theta).\label{2.22d}
\ees
Especially, one can find $C_*>0$ such that
\bess
\int_t^{t+\theta}\ii|\tr z|^p\le C(\|z_0\|_{W^{2,p}(\oo)}^{p}+K+H)\ \ \ \ \ for\ all\ t\in(0,T-\theta).
\eess
\end{lem}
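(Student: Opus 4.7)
The plan is to reduce the estimate to the standard maximal $L^p$-regularity for the Neumann Laplacian on smooth bounded domains: on any interval $(0,S)$ of length $S\le 2$, every solution of $u_t=\tr u+g$ with homogeneous Neumann data and initial datum $u_0\in W^{2,p}(\oo)$ satisfying $\pl_\nu u_0=0$ on $\pl\oo$ obeys
\bess
\int_0^S\|u_t\|_{L^p(\oo)}^p+\int_0^S\|u\|_{W^{2,p}(\oo)}^p\le C_0\kk(\|u_0\|_{W^{2,p}(\oo)}^p+\int_0^S\|g\|_{L^p(\oo)}^p\rr),
\eess
with $C_0=C_0(N,p,\oo,S)$ remaining bounded as $S$ ranges over $(0,2]$. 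Since $2\theta\le 2$, every interval I will use below falls into this regime, so $C_0$ can be chosen uniformly.

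First I would dispose of the initial window $t\in(0,\theta)$. Applying the estimate above on $(0,\min\{T,2\theta\})$ with $g=f$ and initial datum $z_0$, and using that the hypothesis $\int_s^{s+\theta}\ii|f|^p\le K$ covers $(0,2\theta)$ by two consecutive $\theta$-subintervals so that $\int_0^{2\theta}\ii|f|^p\le 2K$, I get
\bess
\int_0^{2\theta}\|z_t\|_{L^p(\oo)}^p+\int_0^{2\theta}\|z\|_{W^{2,p}(\oo)}^p\le C_1\kk(\|z_0\|_{W^{2,p}(\oo)}^p+K\rr),
\eess
which already controls $\int_t^{t+\theta}\|z_t\|_{L^p(\oo)}^p+\int_t^{t+\theta}\|z\|_{W^{2,p}(\oo)}^p$ for every $t\in(0,\theta)$.

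For $t_0\in[\theta,T-\theta)$ I would remove the dependence on the initial trace by a standard temporal cutoff. Pick $\eta\in C^\yy(\R)$ with $\eta\equiv 0$ on $(-\yy,t_0-\theta]$, $\eta\equiv 1$ on $[t_0,\yy)$, $0\le\eta\le 1$ and $|\eta'|\le C_2\theta^{-1}$. Then $\tilde z:=\eta z$ solves $\tilde z_t=\tr\tilde z+\eta f+\eta' z$ on $(t_0-\theta,t_0+\theta)$ with homogeneous Neumann data on $\pl\oo$ and zero initial value at $s=t_0-\theta$. Applying maximal regularity on this length-$2\theta$ interval and using $\tilde z\equiv z$ on $[t_0,t_0+\theta]$, I obtain
\bess
\int_{t_0}^{t_0+\theta}\|z_t\|_{L^p(\oo)}^p+\int_{t_0}^{t_0+\theta}\|z\|_{W^{2,p}(\oo)}^p\le C_3\int_{t_0-\theta}^{t_0+\theta}\ii\kk(|f|^p+|z|^p\rr)\le C_4(K+H),
\eess
since $(t_0-\theta,t_0+\theta)$ is the union of two consecutive windows of length $\theta$. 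Combining both ranges yields \eqref{2.22d}, and the last assertion follows from $|\tr z|\le\sqrt{N}|D^2 z|$, hence $\|\tr z\|_{L^p(\oo)}\le\sqrt{N}\|z\|_{W^{2,p}(\oo)}$.

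The delicate point will be keeping all constants uniform in $t_0$: the maximal regularity constant $C_0$ depends on the interval length, which is always bounded by $2$, and the cutoff derivative satisfies $|\eta'|\le C_2\theta^{-1}$ with $\theta=\min\{1,T/2\}$ fixed once $T$ is fixed, so both contributions are absorbed into a single $C$ depending on $T$ (but not on $t$). The rest is just invoking the Neumann maximal $L^p$-regularity on smooth bounded domains, which is classical (e.g., via the Hieber--Pr\"uss functional calculus for the Neumann Laplacian).
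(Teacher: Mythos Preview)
Your proposal is correct and follows essentially the same route as the paper: a direct application of maximal $L^p$-regularity on the initial window $(0,2\theta)$, followed by a temporal cutoff argument (the paper uses an increasing $\rho$ with $\rho\equiv0$ on $(-\infty,0]$, $\rho\equiv1$ on $(1,\infty)$, translated to start at $\sigma$) to handle later times, citing Giga--Sohr where you cite Hieber--Pr\"uss. The only cosmetic difference is that the paper splits explicitly into the cases $T\le2$ and $T>2$, whereas you treat them uniformly; note in particular that your second step with $t_0\in[\theta,T-\theta)$ is vacuous when $T\le2$ (since then $2\theta=T$), so the $\theta^{-1}$ in your cutoff bound never actually enters unless $\theta=1$, and your constant is in fact independent of $T$ just as in the paper.
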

\begin{proof}
We used the ideas in \cite{Black-M3AS2020,Lou-Winkler} to show this lemma. The proof will be split into two cases.

{\bf Case I: $T\le2$}. It is easy to see that $\theta=T/2$, i.e., $T=2\theta$. Then, we use the maximal Sobolev regularity properties of the Neumann heat semigroup $(e^{t\Delta})_{t\ge0}$ (\cite{Giga-Sohr}) and the first inequality in \eqref{2.22c} to get $C_1>0$ fulfilling
\bes
&&\int_0^{2\theta}\|z_t\|_{L^p(\oo)}^{p}+\int_0^{2\theta}\|z\|_{W^{2,p}(\oo)}^{p}\nm\\
&\le&C_1\|z_0\|_{W^{2,p}(\oo)}^{p}+C_1\int_0^{2\theta}\|f\|_{L^p(\oo)}^p\nm\\
&\le& C_1\|z_0\|_{W^{2,p}(\oo)}^{p}+2C_1K.\label{2.22a}
\ees
Thus, we get \eqref{2.22d} directly.

{\bf Case II: $T>2$}. Clearly, we have $\theta=1$. Hence, \eqref{2.22d} can be rewritten as
\bes
\int_t^{t+1}\|z_t\|_{L^p(\oo)}^{p}+\int_t^{t+1}\|z\|_{W^{2,p}(\oo)}^{p}\le C(\|z_0\|_{W^{2,p}(\oo)}^{p}+K+H)\ \ \ \ \ \ \faa\ t\in(0,T-1). \label{2.22b}
\ees

Let $\rho\in C^\infty(\R)$ be an increasing function satisfying
\[0\le\rho\le1\ \ {\rm in}\ \R,\ \ \rho\equiv0\ \ {\rm in}\ (-\infty,0],\ \ \rho\equiv1\ \ {\rm in}\ (1,\infty).\]
It is easy to see that $\|\rho'\|_{L^{\yy}(\R)}=\|\rho'\|_{L^{\yy}([0,1])}$.
For arbitrary fixed $\sigma\in(0,T-2)$, we define $\rho_\sigma(t)=\rho(t-\sigma)$. Then, $\rho_\sigma\in[0,1]$ and $\|\rho_\sigma'\|_{L^{\yy}(\R)}=\|\rho'\|_{L^{\yy}(\R)}=\|\rho'\|_{L^{\yy}([0,1])}$. By direct computations, $\rho_\sigma z$ solves
\bess
 \left\{\begin{array}{lll}
 (\rho_\sigma z)_t=\tr (\rho_\sigma z)+\rho_\sigma' z+\rho_\sigma f(x,t),&x\in\Omega,\ \ t\in(\sigma,T),\\[1mm]
 \frac{\partial(\rho_\sigma z)}{\pl\nu}=0,\ \ &x\in\partial\Omega,\ t\in(\sigma,T),\\[1mm]
 (\rho_\sigma z)(x,\sigma)=0,\ &x\in\Omega.
 \end{array}\right.
 \eess
Making use of the maximal Sobolev regularity properties of the Neumann heat semigroup $(e^{t\Delta})_{t\ge0}$ (\cite{Giga-Sohr}), there exist $C_2>0$ such that
\bess
\int_\sigma^{\sigma+2}\| (\rho_\sigma z)_t\|_{L^p(\oo)}^{p}+\int_\sigma^{\sigma+2}\|\rho_\sigma z\|_{W^{2,p}(\oo)}^{p}
\le C_2\int_\sigma^{\sigma+2}\|\rho_\sigma' z+ \rho_\sigma f\|_{L^{p}(\oo)}^{p}\ \ \ \ \ \ \faa\ \sigma\in(0,T-2).
\eess
Thanks to \eqref{2.22c} and the boundedness properties of $\rho_\sigma$, we have from the above inequality that
\bess
&&\int_\sigma^{\sigma+2}\| (\rho_\sigma z)_t\|_{L^p(\oo)}^{p}+\int_\sigma^{\sigma+2}\|\rho_\sigma z\|_{W^{2,p}(\oo)}^{p}\nm\\
&\le& C_2\|\rho_\sigma'\|_{L^{\yy}(\R)}\int_\sigma^{\sigma+2} \|z\|_{L^p(\oo)}^p+ C_2\int_\sigma^{\sigma+2}\|f\|_{L^{p}(\oo)}^{p}\nm\\
&\le& C_3(K+H)\ \ \ \ \ \ \faa\ \sigma\in(0,T-2),
\eess
which combined with $\rho_\sigma=1$ in $(\sigma+1,\sigma+2)$ implies
\bess
\int_{\sigma+1}^{\sigma+2}\| z_t\|_{L^p(\oo)}^{p}+\int_{\sigma+1}^{\sigma+2}\|z\|_{W^{2,p}(\oo)}^{p}\le C_3(K+H)\ \ \ \ \ \ \faa\ \sigma\in(0,T-2),
\eess
i.e.,
\bess
\int_{t}^{t+1}\| z_t\|_{L^p(\oo)}^{p}+\int_{t}^{t+1}\|z\|_{W^{2,p}(\oo)}^{p}\le C_3(K+H)\ \ \ \ \ \ \faa\ t\in(1,T-1).
\eess
For $t\in(0,1]$, we have \eqref{2.22b} from \eqref{2.22a} with $\theta=1$. So, we get \eqref{2.22d} in the case of $\ty>2$. The proof is finished.
\end{proof}

Before ending this section, we recall from \cite{WW-M3AS2020} a generalized version of Gronwall's inequality.
\begin{lem}\label{l2.6}
Let $c,k>0$. Assume that for some $\hat T\in(0,\infty]$ and $\hat\tau=\min\{1,\frac{\hat T}{2}\}$, the nonnegative functions $y\in C([0,\hat T))\cap C^1((0,\hat T))$, $z\in L_{loc}^1([0,\hat T))$ and satisfy
 \bess
 &y'(t)+cy(t)\le z(t),\ \ \ t\in(0,\hat T),&\\[1mm]
 &\dd\int_t^{t+\hat\tau}z(s){\rm d}s\le k,\ \ \ t\in(0,\hat T-\hat\tau).&\nonumber
 \eess
Then
\bess
y(t)\le y(0)+2k+\frac{k}{c},\ \ \ t\in(0,\hat T).
\eess
\end{lem}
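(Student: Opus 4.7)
The plan is to apply the standard integrating-factor technique to $y'+cy\le z$. Multiplying by $e^{ct}$ and integrating over $(0,t)$ gives
\[
y(t)\le e^{-ct}y(0)+\int_0^t e^{-c(t-s)}z(s)\ds\le y(0)+\int_0^t e^{-c(t-s)}z(s)\ds
\]
for every $t\in(0,\hat T)$, so it suffices to dominate the convolution integral by $2k+k/c$ uniformly in $t$.

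I would split into the two cases encoded in the definition of $\hat\tau$. If $\hat T\le 2$, then $\hat\tau=\hat T/2$ and every $t\in(0,\hat T)$ satisfies $t<2\hat\tau$; here I would discard the exponential factor and cover $[0,t]$ by the two length-$\hat\tau$ intervals $[0,\hat\tau]$ and $[t-\hat\tau,t]$, so that the hypothesis (used as a limit for the boundary case) gives $\int_0^t z\le 2k\le 2k+k/c$.

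If instead $\hat T>2$, then $\hat\tau=1$ and $t$ may be arbitrarily large, so the exponential decay becomes essential. Writing $t=n+t'$ with $n=\lfloor t\rfloor$ and $t'\in[0,1)$, I would decompose
\[
\int_0^t e^{-c(t-s)}z(s)\ds=\int_0^{t'} e^{-c(t-s)}z(s)\ds+\sum_{j=0}^{n-1}\int_{t'+j}^{t'+j+1} e^{-c(t-s)}z(s)\ds,
\]
bound $e^{-c(t-s)}$ by its largest value on each piece (namely $e^{-cn}$ on the first and $e^{-c(n-j-1)}$ on the $j$-th of the remaining) and each $L^1$-integral of $z$ by $k$ via the hypothesis. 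Summing the resulting geometric series produces $\int_0^t e^{-c(t-s)}z(s)\ds\le k/(1-e^{-c})$, and the elementary inequality $\frac{1}{1-e^{-c}}\le 1+\frac{1}{c}$, which follows at once from $e^c\ge 1+c$, yields the claimed bound.

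I expect the only non-routine point to be the bookkeeping in the geometric-series estimate together with the observation that the boundary instances of the hypothesis (needed for the endpoint intervals such as $[0,\hat\tau]$ and $[t-\hat\tau,t]$) are recovered by passing to the limit, which is legitimate since $z\in L_{loc}^1$.
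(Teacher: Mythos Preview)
The paper does not actually prove this lemma; it merely recalls it from \cite{WW-M3AS2020} without argument. So there is no proof in the paper to compare against, and your task reduces to giving a correct self-contained proof, which you have done.

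Your argument is sound. The integrating-factor reduction to bounding $\int_0^t e^{-c(t-s)}z(s)\,{\rm d}s$ is the natural first step, and your two-case split matches the definition of $\hat\tau$. In the case $\hat T\le 2$ the crude bound $\int_0^t z\le 2k$ via two overlapping windows is fine (with the understanding that for $t<\hat\tau$ a single window $[0,\hat\tau]$ already suffices). In the case $\hat T>2$, your geometric-series computation is correct: summing $k e^{-cn}+k\sum_{j=0}^{n-1}e^{-c(n-j-1)}$ gives $k\,\dfrac{1-e^{-c(n+1)}}{1-e^{-c}}\le \dfrac{k}{1-e^{-c}}$, and the inequality $\dfrac{1}{1-e^{-c}}\le 1+\dfrac{1}{c}$ is equivalent to $1+c\le e^c$. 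Your remark about recovering the endpoint windows (e.g.\ $\int_0^{\hat\tau}z\le k$) by continuity of $t\mapsto\int_t^{t+\hat\tau}z$ in $L^1_{loc}$ is exactly the right justification and should be stated explicitly in a final write-up.
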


\section{A criteria governing boundedness of solutions}
\setcounter{equation}{0} {\setlength\arraycolsep{2pt}
It is known that, the uniform-in-time $L^\yy$ boundedness of $u$ can be ensured by \eqref{2.3} and $L^p$ regularity of $\nn w$ with $p>N$. And, Lemma \ref{l2.4} and Lemma \ref{l2.6} tell us that, we can use the space-time integral property of $u,v$ to estimate the $L^p$ regularity of $\nn w$. Hence, we obtain the uniform boundedness of $u$ provided suitable space-time regularity for $u$ and $v$. The upper bound for $u$ in the coming lemma is independent of $\chi,\xi,b_1,b_2$, which is critical for the construction of the global stability of the positive equilibrium.
\begin{lem}\label{l3.1}
Let $N\ge2$ and $\alpha,\beta>1$. Suppose that there exist $\bar p,\bar q>\frac{N}2+1$ and $K_1,K_2>0$ such that
\bes
\int_t^{t+\tau}\ii u^{\bar q}\le K_1\ \ and\ \ \int_t^{t+\tau}\ii v^{\bar p}\le K_2\ \ for\ \ all\ t\in(0,\tyy). \label{3.1}
\ees
Then, there exist $C>0, \theta>1$ and $\eta=:\min\{\bar p,\bar q\}>\frac{N}2+1$ independent of $\chi,\xi,b_1,b_2,K_1,K_2$ fulfilling
\bes
\|\nn w(\cdot,t)\|_{L^{2(\eta-1)}(\oo)}\le C(K_1+K_2+1)\ \ \ \ \ for\ all\ t\in(0,\ty) \label{3.2}
\ees
and
\bes
\|u(\cdot,t)\|_{L^\yy(\oo)}\le C(b_1^{-\frac{1}{\alpha-1}}+1)+C\chi^\theta M_1(K_1+K_2+1)^\theta\ \ \ \ \ for\ all\ t\in(0,\ty). \label{3.3}
\ees
\end{lem}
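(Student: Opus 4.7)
The strategy is to set $\eta := \min\{\bar p,\bar q\}$, so that $\eta > \frac{N}{2}+1$ and hence $q := 2(\eta-1) > N$. I would first derive \qq{3.2} by coupling Lemma \ref{l2.4} to the generalized Gronwall inequality in Lemma \ref{l2.6}, and then apply a standard Neumann heat semigroup argument to the first equation of \qq{1.1} to deduce \qq{3.3}.

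For \qq{3.2}: the elementary inequality $s^\eta\le s^{\bar q}+1$ (valid for $s\ge 0$ since $\eta\le\bar q$) combined with \qq{3.1} yields $\int_t^{t+\tau}\ii u^\eta\le K_1+\tau|\oo|$, and analogously $\int_t^{t+\tau}\ii v^\eta\le K_2+\tau|\oo|$, for all $t\in(0,\tyy)$. Applying Lemma \ref{l2.4} with $p=\eta-1\ge 1$ (which holds since $N\ge 2$) produces
\bess
\dv\ii|\nn w|^{2(\eta-1)}+\mu\ii|\nn w|^{2(\eta-1)}\le C\ii u^\eta+C\ii v^\eta+C,
\eess
and Lemma \ref{l2.6}, applied with $y(t)=\ii|\nn w|^{2(\eta-1)}$ and $c=\mu$, gives $y(t)\le C(K_1+K_2+1)$ uniformly in $t$. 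Taking the $\frac{1}{2(\eta-1)}$-th root and using $K_1+K_2+1\ge 1$ yields \qq{3.2}.

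For \qq{3.3}: rewrite the $u$-equation as $u_t=(\tr-1)u-\chi\nn\cd(u\nn w)+(a_1+1)u-b_1 u^\alpha$ and apply variation-of-constants against the semigroup $(e^{\sigma(\tr-1)})_{\sigma\ge 0}$. The free term $e^{t(\tr-1)}u_0$ is controlled by $\|u_0\|_{L^\yy(\oo)}$. The logistic integral is bounded by $C(b_1^{-1/(\alpha-1)}+1)$ using the pointwise maximum of $s\mapsto(a_1+1)s-b_1 s^\alpha$ together with $\int_0^t e^{-(t-s)}\,\ds\le 1$. The chemotaxis integral is handled via the standard estimate $\|e^{\sigma(\tr-1)}\nn\cd\vp\|_{L^\yy(\oo)}\le C(1+\sigma^{-\frac12-\frac{N}{2p}})e^{-\sigma}\|\vp\|_{L^p(\oo)}$ with some $p\in(N,q)$. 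Defining $p_1$ by $\frac1p=\frac{1}{p_1}+\frac1q$, H\"older's inequality gives $\|u\nn w\|_{L^p}\le\|u\|_{L^{p_1}}\|\nn w\|_{L^q}$, and interpolation $\|u\|_{L^{p_1}}\le\|u\|_{L^\yy}^{1-1/p_1}\|u\|_{L^1}^{1/p_1}$ combined with \qq{2.3} and \qq{3.2} yields, with $M(T):=\sup_{t\in(0,T)}\|u(\cd,t)\|_{L^\yy}$,
\bess
M(T)\le\|u_0\|_{L^\yy}+C(b_1^{-1/(\alpha-1)}+1)+C\chi M_1^{1/p_1}(K_1+K_2+1)M(T)^{1-1/p_1}.
\eess
Young's inequality absorbs $M(T)^{1-1/p_1}$ into $\frac12 M(T)$ at the cost of raising the remaining factor to the power $p_1$, producing \qq{3.3} with $\theta=p_1>1$.

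The main obstacle is arranging the estimate so that $\chi$, $M_1$, and $K_1+K_2+1$ appear in exactly the form required by \qq{3.3}, with the constant $C$ independent of $\chi,\xi,b_1,b_2,K_1,K_2$. This forces interpolation of $\|u\|_{L^{p_1}}$ against the $L^1$-norm --- since $\|u\|_{L^1}\le M_1$ by \qq{2.3} already encodes all relevant $b_1$-dependence through $M_1$, whereas any higher $L^r$ bound would reintroduce unwanted parameter dependencies --- and requires the strict inequality $q>N$ (equivalently $\eta>\frac{N}{2}+1$) so that one can choose $p>N$ in the semigroup bound, ensuring the time integral $\int_0^t(1+(t-s)^{-\frac12-\frac{N}{2p}})e^{-(t-s)}\,\ds$ converges uniformly in $t$.
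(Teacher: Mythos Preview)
Your proposal is correct and follows essentially the same route as the paper. In the paper's notation your $q=2(\eta-1)$ is their $\kappa$, your intermediate exponent $p\in(N,q)$ is their $q\in(N,\kappa)$, and your $p_1=pq/(q-p)$ coincides with their $\theta=\kappa q/(\kappa-q)$; the H\"older--interpolation--Young chain is identical. The only cosmetic differences are that you shift the semigroup to $e^{\sigma(\Delta-1)}$ (so the exponential decay is built in rather than coming from a spectral gap $\lambda_1$) and you bound the logistic term via the pointwise maximum of $s\mapsto(a_1+1)s-b_1 s^\alpha$ rather than writing $(a_1u-b_1u^\alpha)_+$, but these lead to the same estimate.
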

\begin{proof} We use $C_i$ to denote the general constants appeared in the proof, which are independent of $t$ and $\chi,\xi,b_1,b_2,K_1,K_2$. Let $\kappa:=2(\eta-1)$. Clearly, $\kappa>N$. We have from \eqref{3.1} that
\bess
\int_t^{t+\tau}\kk(\ii (u^\eta+v^\eta)+1\rr)\le C_1(K_1+K_2+1)\ \ \ \ \ \ \faa\ t\in(0,\tyy)
\eess
for some $C_1>0$. By Lemma \ref{l2.4}, there are $C_2,C_3>0$ such that
\bess
\dv\ii |\nn w|^{\kappa}+C_2\ii|\nn w|^\kappa\le C_3\kk(\ii (u^\eta+v^\eta)+1\rr)\ \ \ \ \ \ \faa\ t\in(0,\ty).
\eess
Then, in light of Lemma \ref{l2.6}, we get
\bes
\ii |\nn w|^{\kappa}&\le& \ii |\nn w_0|^{\kappa}+\kk(2+\frac{1}{C_2}\rr)C_1C_3(K_1+K_2+1)\nm\\
&\le&C_4(K_1+K_2+1)\ \ \ \ \ \ \faa\ t\in(0,\ty)\label{3.5c}
\ees
where $C_4=\ii |\nn w_0|^{\kappa}+\kk(2+\frac{1}{C_2}\rr)C_1C_3+1$. We thus get \eqref{3.2}. Since $C_4>1$ and $K_1+K_2+1>1$, we have from \eqref{3.5c} that
\bes
\|\nn w(\cdot,t)\|_{L^\kappa(\oo)}\le C_4(K_1+K_2+1)\ \ \ \ \ \ \faa\ t\in(0,\ty).\label{3.5}
\ees

Let us take $N<q<\kappa$ and set $\theta:=\frac{\kappa q}{\kappa-q}>N$. We denote
\bess
H(T)=\sup_{t\in(0,T)}\|u(\cdot,t)\|_\infty<\infty\ \  {\rm for}\ \  T\in(0,\ty).
\eess
On the basis of a variation-of-constants representation of $u$ and the known regularized properties of $(e^{t\Delta})_{t\ge0}$ (\cite{Fujie-I-W,M.W.2010}), one can find $\lm_1,\,C_5>0$ such that
\bes
\|u(\cdot,t)\|_\infty&\le& \|u_0\|_{L^\yy(\oo)}+\chi\int_0^t\|e^{(t-s)\Delta}\nabla\cdot(u\nabla w)\|_{L^\yy(\oo)}{\rm d}s+\int_0^t\|e^{(t-s)\Delta}(a_1u-b_1u^\alpha)\|_{L^\yy(\oo)}{\rm d}s\nonumber\\[0.5mm]
&\le&\|u_0\|_{L^\yy(\oo)}+\chi\int_0^t\|e^{(t-s)\Delta}\nabla\cdot(u\nabla w)\|_{L^\yy(\oo)}{\rm d}s+\int_0^t\|e^{(t-s)\Delta}(a_1u-b_1u^\alpha)_+\|_{L^\yy(\oo)}{\rm d}s\nonumber\\[0.5mm]
&\le&C_5+C_5\chi\int_0^t\kk(1+(t-s)^{-\frac12-\frac{N}{2q}}\rr)e^{-\lm_1(t-s)}\|u\nabla w\|_{L^q(\oo)}{\rm d}s\nm\\
&&+\int_0^t\|e^{(t-s)\Delta}(a_1u-b_1u^\alpha)_+\|_{L^\yy(\oo)}{\rm d}s\ \ \ \ \ \ \faa\ t\in(0,T).\label{3.4}
\ees
Making use of H\"{o}lder's inequality, \eqref{3.5} and the first inequality in \eqref{2.3}, we infer that, for all $t\in(0,T)$,
\bes
\|u\nabla w\|_{L^q(\oo)}\le\|u\|_{\theta}\|\nabla w\|_{L^{\kappa}(\oo)}
&=&\kk(\int_\Omega u^{\theta}\rr)^{\frac1{\theta}}\|\nabla w\|_{L^{\kappa}(\oo)}\nm\\
&\le& \kk(\int_\Omega u\rr)^{\frac1{\theta}}H^{\frac{\theta-1}{\theta}}(T)\|\nabla w\|_{L^{\kappa}(\oo)}\nm\\
&\le&C_4M_1^{\frac1{\theta}}(K_1+K_2+1)H^{\frac{\theta-1}{\theta}}(T)\label{3.5aa}
\ees
Letting $f(u)=a_1u-b_1u^\alpha$, there exists $C_6>0$ such that
$$(a_1u-b_1u^\alpha)_+\le f\kk(\kk(\frac{a_1}{b_1\alpha}\rr)^{\frac1{\alpha-1}}\rr)\le C_6 b_1^{-\frac{1}{\alpha-1}}.$$
In conjunction with the known regularized properties of $(e^{t\Delta})_{t\ge0}$ (\cite{M.W.2010}), this shows that
\bes
&&\int_0^t\|e^{(t-s)\Delta}(a_1u-b_1u^\alpha)_+\|_{L^{\yy }(\oo)}{\rm d}s\nm\\
&\le& C_7\int_0^te^{-\lm_1(t-s)}\|(a_1u-b_1u^\alpha)_+\|_{L^{\yy}(\oo)}{\rm d}s\nm\\
&\le& C_8 b_1^{-\frac{1}{\alpha-1}}\ \ \ \ \ \ \faa\ t\in(0,\ty)\label{3.5bb}
\ees
for some $C_7,C_8>0$. Inserting \eqref{3.5aa} and \eqref{3.5bb} into \eqref{3.4} yields that, for some $C_9>0$,
\bess
\|u(\cdot,t)\|_\infty&\le&C_9(b_1^{-\frac{1}{\alpha-1}}+1)\nm\\
&&+C_9\chi M_1^{\frac1{\theta}}(K_1+K_2+1)H^{\frac{\theta-1}{\theta}}(T)\int_0^t\kk(1+(t-s)^{-\frac12-\frac{N}{2q}}\rr)e^{-\lm_1(t-s)}{\rm d}s,\nonumber\\[0.5mm]
&\le&C_9(b_1^{-\frac{1}{\alpha-1}}+1)+C_{10}\chi M_1^{\frac1{\theta}}(K_1+K_2+1)H^{\frac{\theta-1}{\theta}}(T)\ \ \ \faa\ t\in(0,T),
\eess
which implies
\[H(T)\le C_9(b_1^{-\frac{1}{\alpha-1}}+1)+C_{10}\chi M_1^{\frac1{\theta}}(K_1+K_2+1)H^{\frac{\theta-1}{\theta}}(T)\ \ \ \faa\ T\in(0,\ty).\]
Hence, thanks to Young's inequality, we have
\bess
H(T)\le C_{11}(b_1^{-\frac{1}{\alpha-1}}+1)+C_{11}\chi^\theta M_1(K_1+K_2+1)^\theta
\eess
for all $ T\in(0,\ty)$. This combined with the definition of $H(T)$ finishes the proof.
\end{proof}

We proceed to derive the uniform-in-time $L^\yy$ boundedness of $v$. Similar to the situation of $u$, the boundedness of $v$ relies on $\nn u$. Since we have obtained the $L^\yy$ boundedness of $u$, Lemma \ref{2.4}(i) can be applied to estimate $\nn u$. To achieve this, we need to establish the space-time $L^p$ integral property of $\tr w$ which is ensured by the Sobolev maximal regularity asserted in Lemma \ref{l2.5}.

\begin{lem}\label{l3.2}
Let $N\ge2$ and $\alpha,\beta>1$. Assume that for some $\bar p,\bar q>\frac{N}{2}+1$, there exist $K_1,K_2>0$ fulfilling
\bess
\int_t^{t+\tau}\ii u^{\bar q}\le K_1\ \ \ \ \ for\ all\ t\in(0,\tyy)
\eess
and
\bes
\int_t^{t+\tau}\ii v^{\bar p}\le K_2\ \ \ \ \ for\ all\ t\in(0,\tyy). \label{3.6}
\ees
Then there exists $C>0$ such that
\bes
\ii|\nn u|^{2(\bar p-1)}\le C\ \ \ \ \ \ \faa\ t\in(0,\ty),\label{3.6a}
\ees
and
\bes
\|v(\cdot,t)\|_{L^\yy(\oo)}\le C\ \ \ \ \ for\ all\ t\in(0,\ty).\label{3.17}
\ees
\end{lem}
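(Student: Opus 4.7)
The plan is to first establish the gradient bound \eqref{3.6a} and then use it, via a variation-of-constants argument parallel to the proof of Lemma \ref{l3.1}, to obtain the $L^\infty$ bound \eqref{3.17} on $v$.

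Since Lemma \ref{l3.1} already provides uniform $L^\infty$ boundedness of $u$, the hypothesis of Lemma \ref{l2.3}(i) is met. Taking $p = \bar p - 1 > 1$ (legitimate because $\bar p > N/2 + 1 \ge 2$), Lemma \ref{l2.3}(i) yields
\bess
\dv \ii |\nn u|^{2(\bar p - 1)} + \ii |\nn u|^{2(\bar p - 1)} \le C\kk( \ii |\nn w|^{2 \bar p} + \ii |\tr w|^{\bar p} + 1 \rr) \ \ \faa\ t \in (0, \ty),
\eess
so by Lemma \ref{l2.6} it suffices to control the right-hand side in space-time $L^1$ over $(t, t+\tau)$ uniformly in $t$. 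For the $|\tr w|^{\bar p}$ contribution I would apply Lemma \ref{l2.5} to the $w$-equation with exponent $\bar p$ and forcing $f := -(u+v)w - \mu w + r$. Since $u$ and $w$ are $L^\infty$-bounded, $|f| \le C(1 + v)$, so \eqref{3.6} delivers $\int_t^{t+\tau} \ii |f|^{\bar p} \le C(1 + K_2)$, and Lemma \ref{l2.5} then supplies a uniform bound on $\int_t^{t+\tau} \ii |D^2 w|^{\bar p}$ and hence on $\int_t^{t+\tau} \ii |\tr w|^{\bar p}$ via $|\tr w| \le \sqrt{N} |D^2 w|$.

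To handle $\int_t^{t+\tau} \ii |\nn w|^{2 \bar p}$ I would use the pointwise-in-time inequality \eqref{2.9a} with $p = \bar p - 1$, namely $\ii |\nn w|^{2 \bar p} \le C \ii |\nn w|^{2(\bar p - 2)} |D^2 w|^2$, and then invoke H\"older with conjugate exponents $\frac{\bar p}{\bar p - 2}$ and $\frac{\bar p}{2}$ (admissible because $\bar p > 2$) to obtain
\bess
\ii |\nn w|^{2(\bar p - 2)} |D^2 w|^2 \le \kk(\ii |\nn w|^{2 \bar p}\rr)^{\frac{\bar p - 2}{\bar p}} \kk(\ii |D^2 w|^{\bar p}\rr)^{\frac{2}{\bar p}}.
\eess
Absorbing the common factor and raising to the appropriate power yields the clean pointwise-in-time estimate $\ii |\nn w|^{2 \bar p} \le C \ii |D^2 w|^{\bar p}$, which upon time integration reduces to the space-time bound on $|D^2 w|^{\bar p}$ already at hand. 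Applying Lemma \ref{l2.6} then delivers \eqref{3.6a}.

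For \eqref{3.17}, since $2(\bar p - 1) > N$, the bound \eqref{3.6a} together with the $L^1$ control from Lemma \ref{l2.2} supplies uniform $L^{2(\bar p - 1)}$ bounds for $\nn u$, which is exactly the ingredient needed to rerun the Duhamel argument from Lemma \ref{l3.1} verbatim with $(u, w, \chi, \alpha, b_1)$ replaced by $(v, u, \xi, \beta, b_2)$. Picking $q$ with $N < q < 2(\bar p - 1)$ and setting $\theta = \frac{2(\bar p - 1) q}{2(\bar p - 1) - q} > N$, H\"older gives $\|v \nn u\|_{L^q(\oo)} \le \|v\|_{L^\theta(\oo)} \|\nn u\|_{L^{2(\bar p - 1)}(\oo)} \le C M_2^{1/\theta} H^{1 - 1/\theta}(T)$ with $H(T) := \sup_{t < T} \|v(\cdot, t)\|_{\lo}$, while the logistic contribution is controlled exactly as in \eqref{3.5bb}. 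Closing with Young's inequality yields $H(T) \le C$, which is \eqref{3.17}. The principal obstacle throughout is the control of $\ii |\nn w|^{2 \bar p}$: the $W^{2, \bar p}$-regularity provided by Lemma \ref{l2.5} does not embed strongly enough to propagate the full power $2 \bar p$ into an $L^1$-in-time bound on $|\nn w|^{2\bar p}$, and the inequality \eqref{2.9a} combined with H\"older is the key device that reroutes the argument through the $D^2 w$ term, where maximal regularity is directly applicable.
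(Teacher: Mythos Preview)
Your proposal is correct and follows essentially the same route as the paper: apply Lemma \ref{l2.5} to the $w$-equation to bound $\int_t^{t+\tau}\ii|\tr w|^{\bar p}$, feed this into the differential inequality from Lemma \ref{l2.3}(i), close via Lemma \ref{l2.6}, and then rerun the Duhamel argument of Lemma \ref{l3.1} for $v$. The only cosmetic difference is in how you dispatch $\ii|\nn w|^{2\bar p}$: the paper invokes the Gagliardo--Nirenberg inequality directly to obtain $\ii|\nn w|^{2\bar p}\le C_5\ii|\tr w|^{\bar p}+C_5$, whereas you combine \eqref{2.9a} with H\"older and an absorption to reach $\ii|\nn w|^{2\bar p}\le C\ii|D^2w|^{\bar p}$ --- both yield the same pointwise-in-time reduction to a quantity already controlled by Lemma \ref{l2.5}.
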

\begin{proof}
Let $f(x,t):=-(u+v)w-\mu w+r$. By \eqref{1.3}, \eqref{2.2}, \eqref{3.3} and \eqref{3.6}, there exists $C_1>0$ such that
 \bes
\int_t^{t+\tau}\ii |f|^{\bar p}\le C_1\ \ \ \ \ \ \faa\ t\in(0,\tyy).\label{3.7bb}
 \ees
It follows from \eqref{1.1} that $w$ satisfies
\bes
 \left\{\begin{array}{lll}
 w_t=\tr w+f(x,t),&x\in\Omega,\ \ t>0,\\[1mm]
 \frac{\partial w}{\partial\nu}=0,\ \ &x\in\partial\Omega,\ t>0,\\[1mm]
 w(x,0)=w_0(x),\ &x\in\Omega.
 \end{array}\right.\label{3.6c}
 \ees
Thanks to \eqref{2.2} and \eqref{3.7bb}, we apply Lemma \ref{l2.5} to \eqref{3.6c} to find $C_2>0$ such that
\bes
\int_t^{t+\tau}\ii|\tr w|^{\bar p}\le C_2 \ \ \ \ \ for\ all\ t\in(0,\tyy).\label{3.7}
\ees
It follows from \eqref{3.3} and Lemma \ref{l2.3}(i) that
\bes
\dv\ii|\nn u|^{2(\bar p-1)}+\ii|\nn u|^{2(\bar p-1)}\le C_3\kk(\ii|\nn w|^{2\bar p}+\ii|\Delta w|^{\bar p}+1\rr)\ \ \ \ \faa\ t\in(0,\ty),\label{3.8}
\ees
where $C_3>0$. In view of the Gagliardo-Nirenberg inequality and \eqref{2.2}, there are $C_4,C_5>0$ such that
\bess
\ii|\nn w|^{2\bar p}&=&\|\nn w\|_{L^{2\bar p}(\oo)}^{2\bar p}\nm\\
&\le& C_4\|\tr w\|_{L^{\bar p}(\oo)}^{\bar p}\|w\|_{L^\yy(\oo)}^{\bar p}+C_4\|w\|_{L^\yy(\oo)}^{2\bar p}\nm\\
&\le& C_5\ii |\tr w|^{\bar p}+C_5.
\eess
Inserting this into \eqref{3.8} yields $C_6>0$ fulfilling
\bes
\dv\ii|\nn u|^{2(\bar p-1)}+\ii|\nn u|^{2(\bar p-1)}\le C_6\ii|\Delta w|^{\bar p}+C_6\ \ \ \ \ \ \faa\ t\in(0,\ty).\label{3.9}
\ees
By using \eqref{3.9}, \eqref{3.7} and Lemma \ref{l2.6}, there exists $C_7>0$ such that
\bes
\ii|\nn u|^{2(\bar p-1)}\le C_7\ \ \ \ \ \ \faa\ t\in(0,\ty).\label{3.10}
\ees
We thus obtain \eqref{3.6a}.

It follows from $\bar p>\frac{N}2+1$ that $\hat p:=2(\bar p-1)>N$. By \eqref{3.10}, there is $C_8>0$ such that
\bes
\ii|\nn u|^{\hat p}\le C_8\ \ \ \ \ \ \faa\ t\in(0,\ty).\label{3.18}
\ees
By standard arguments paralleled to Lemma \ref{l3.1}, one can deduce \eqref{3.17} by using \eqref{3.18} and \eqref{2.3}.
\end{proof}

The following provides a criterion for the global existence and  boundedness of the solution.
\begin{prop}\label{p3.4}
Let $N\ge2$ and $\alpha,\beta>1$. Suppose that there exist $\bar p,\bar q>\frac{N}2+1$ and $K_1,K_2>0$ fulfilling
\bess
\int_t^{t+\tau}\ii u^{\bar q}\le K_1\ \ \ \ \ for\ all\ t\in(0,\tyy)
\eess
and
\bess
\int_t^{t+\tau}\ii v^{\bar p}\le K_2\ \ \ \ \ for\ all\ t\in(0,\tyy).
\eess
Then $\ty=\yy$, and there exist $\theta\in(0,1)$ and $C>0$ fulfilling
\bes
&&\|u\|_{C^{2+\theta,1+\frac{\theta}2}(\bar\oo\times[t,t+1])}+\|v\|_{C^{2+\theta,1+\frac{\theta}2}(\bar\oo\times[t,t+1])}\nm\\
&&\quad\quad+\|w\|_{C^{2+\theta,1+\frac{\theta}2}(\bar\oo\times[t,t+1])}\le C\ \ \ \ \ for\ all\ t\in(0,\yy).\label{3.21a}
\ees
\end{prop}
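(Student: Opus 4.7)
The proof is a bootstrap argument that upgrades the space-time integrability hypotheses first to pointwise-in-time $L^\infty$ bounds on $u,v,w$, then to uniform $L^p$-gradient bounds, and finally to the H\"older estimate \eqref{3.21a}. The starting $L^\infty$ control comes essentially for free: the hypotheses feed directly into Lemma \ref{l3.1}, yielding $\|u(\cdot,t)\|_{L^\infty(\Omega)} \le C$ and $\|\nabla w(\cdot,t)\|_{L^{2(\eta-1)}(\Omega)} \le C$ on $(0,T_{\max})$ with $2(\eta-1)>N$, after which Lemma \ref{l3.2} provides $\|v(\cdot,t)\|_{L^\infty(\Omega)} \le C$ and $\|\nabla u(\cdot,t)\|_{L^{\hat p}(\Omega)} \le C$ for some $\hat p>N$. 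Combined with the $L^\infty$-bound $\|w\|_{L^\infty} \le M$ from Lemma \ref{l2.2}, this gives uniform $L^\infty$ control of the triple $(u,v,w)$ on $(0,T_{\max})$.

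The next task is to raise the gradient regularity high enough to trigger the $W^{1,p}$-extensibility criterion in Lemma \ref{l2.1}: what is still missing is a uniform $L^p$-bound on $\nabla v$ with $p>N$. The idea is to iterate Lemmas \ref{l2.3}, \ref{l2.4}, \ref{l2.5} and \ref{l2.6} with arbitrarily large exponents. Since the source $-(u+v)w-\mu w+r$ of the $w$-equation is uniformly bounded in $L^\infty$, Lemma \ref{l2.5} yields $\int_t^{t+\tau}\int_\Omega |\Delta w|^p \le C_p$ for every $p<\infty$; Lemma \ref{l2.4} combined with the Gr\"onwall-type Lemma \ref{l2.6} then promotes $\nabla w$ to $L^q(\Omega)$ uniformly in $t$ for arbitrarily large $q$, and Lemma \ref{l2.3}(i) gives the analogous bound for $\nabla u$. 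To activate Lemma \ref{l2.3}(ii) one still needs $\Delta u$ in space-time $L^{p+1}$; this is obtained by a second application of Lemma \ref{l2.5}, now to the $u$-equation, after rewriting $-\chi\nabla\cdot(u\nabla w) = -\chi\nabla u\cdot\nabla w-\chi u\Delta w$ and noting that both summands are space-time $L^{p+1}$-integrable by the bounds already collected. Finally, Lemma \ref{l2.3}(ii) together with Lemma \ref{l2.6} yields $\|\nabla v(\cdot,t)\|_{L^p(\Omega)}\le C$ for any prescribed $p$, contradicting the blow-up alternative of Lemma \ref{l2.1} unless $T_{\max}=\infty$.

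With uniform $L^\infty$ control of $(u,v,w)$ and uniform $L^p$-control of their gradients for every $p<\infty$, each equation in \eqref{1.1} becomes a uniformly parabolic equation whose coefficients and data are bounded in every $L^p$ on each cylinder $\Omega\times[t,t+1]$, with constants independent of $t$. Standard parabolic H\"older theory (of Porzio--Vespri or Ladyzhenskaya--Solonnikov--Ural'tseva type) then produces $u,v,w \in C^{\theta,\theta/2}(\bar\Omega\times[t,t+1])$ uniformly in $t$ for some $\theta\in(0,1)$. Treating the taxis gradients as H\"older-continuous first-order coefficients, linear parabolic Schauder theory applied to each equation separately delivers the $C^{2+\theta,1+\theta/2}$-bound \eqref{3.21a}.

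The main obstacle is the bootstrap in the middle paragraph: because the system is doubly cross-diffusive, obtaining $\nabla v$ in a high $L^p$-space presupposes high-order regularity of both $\nabla u$ and $\Delta u$, which in turn presupposes high-order regularity of $\nabla w$ and $\Delta w$, so one must chain Lemmas \ref{l2.3}--\ref{l2.6} through several iterations, matching the exponents at each step so that the output of one estimate supplies the hypothesis of the next. Once this chain is closed, the final H\"older upgrade is standard.
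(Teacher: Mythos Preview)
Your proposal is correct and follows essentially the same route as the paper: invoke Lemmas \ref{l3.1} and \ref{l3.2} to obtain uniform $L^\infty$ bounds on $u,v$ (and a first gradient bound on $w$ and $u$), then run the chain Lemma \ref{l2.5} $\to$ Lemma \ref{l2.4}/Lemma \ref{l2.3}(i) $\to$ Lemma \ref{l2.5} (applied to the $u$-equation with $F=-\chi\nabla u\cdot\nabla w-\chi u\Delta w+a_1u-b_1u^\alpha$) $\to$ Lemma \ref{l2.3}(ii), each closed via Lemma \ref{l2.6}, to obtain $\nabla v\in L^{2p}$ with $2p>N$; finally apply the extensibility criterion and standard parabolic Schauder theory for \eqref{3.21a}. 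The only minor overstatement is the phrase ``several iterations'': once $u,v\in L^\infty(\Omega)$, a \emph{single} pass through the chain with one fixed exponent $p>\tfrac{N}{2}$ (as the paper does, ending with $\nabla w\in L^{2(p+2)}$, $\Delta w\in L^{p+2}_{t,x}$, $\nabla u\in L^{2(p+1)}$, $\Delta u\in L^{p+1}_{t,x}$, $\nabla v\in L^{2p}$) already suffices.
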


\begin{proof}
Let us take $p>\frac{N}2$ throughout this proof. Thanks to \eqref{3.3} and \eqref{3.17}, we can use Lemma \ref{l2.4} and Gronwall's inequality to find $C_1>0$ fulfilling
\bes
\ii|\nn w|^{2(p+2)}\le C_1\ \ \ \ \ \ \faa\ t\in(0,\ty).\label{3.17a}
\ees
Again by \eqref{3.3} and \eqref{3.17}, similar to the derivation of \eqref{3.7}, there is $C_2>0$ such that
\bes
\int_t^{t+\tau}\ii|\tr w|^{p+2}\le C_2 \ \ \ \ \ for\ all\ t\in(0,\tyy).\label{3.22}
\ees
Based on \eqref{3.3}, \eqref{3.17a} and \eqref{3.22}, we apply Lemma \ref{l2.3}(i) and Lemma \ref{l2.6} to get $C_3>0$ fulfilling
\bes
\ii|\nn u|^{2(p+1)}\le C_3\ \ \ \ \ \ \faa\ t\in(0,\tyy).\label{3.23}
\ees
Rewriting the equation of $u$ in \eqref{1.1} as
\bess
 \left\{\begin{array}{lll}
 u_t=\Delta u+F(x,t),&x\in\Omega,\ \ t\in(0,T_m),\\[1mm]
 \frac{\partial u}{\pl\nu}=0,\ \ &x\in\partial\Omega, \ \ t\in(0,T_m),\\[1mm]
 u(x,0)=u_0, &x\in\Omega,
 \end{array}\right.
\eess
where $F(x,t)=-\chi(\nn u\cdot\nn w+u\Delta w)+a_1u-b_1u^\alpha$. Making use of Young's inequality, \eqref{3.3}, \eqref{3.17a}, \eqref{3.22} and \eqref{3.23}, there exists $C_4>0$ fulfilling
\bess
\int_t^{t+\tau}\!\!\ii |F(x,s)|^{p+1}{\rm d}x{\rm d}s\le C_4\ \ \ \ \ \ \faa\ t\in(0,\tyy).
\eess
This combined with \eqref{3.3} enables us to apply Lemma \ref{l2.5} to get $C_5>0$ such that
\bes
\int_t^{t+\tau}\!\!\ii|\Delta u|^{p+1}\le C_5\ \ \ \ \ \ \faa\ t\in(0,\tyy).\label{3.24}
\ees

With \eqref{3.17} at hand, we use Lemma \ref{l2.3}(ii) to get $C_6>0$ such that
\bess
\dv\ii|\nn v|^{2p}+\ii|\nn v|^{2p}\le C_6\kk(\ii|\nn u|^{2(p+1)}+\ii|\Delta u|^{p+1}+1\rr)\ \ \ \ \ \ \faa\ t\in(0,\ty).
\eess
Then, by \eqref{3.23}, \eqref{3.24} and Lemma \ref{l2.6}, we have
\bes
\ii|\nn v(\cdot,t)|^{2p}\le C_7\ \ \ \ \ \ \faa\ t\in(0,\ty) \label{3.25}
\ees
for $C_7>0$.

From \eqref{2.2}, \eqref{3.3}, \eqref{3.17}, \eqref{3.2}, \eqref{3.23} and \eqref{3.25}, one can find $q>N$ and $C_8>0$ such that
\bes
\|u(\cdot,t)\|_{W^{1,q}(\Omega)}+\|v(\cdot,t)\|_{W^{1,q}(\Omega)}+\|w(\cdot,t)\|_{W^{1,q}(\Omega)}\le C_8\ \ \ \ \ \ \faa\ t\in(0,\ty).\label{3.26}
\ees
This deduces that $\ty=\infty$ due to Lemma \ref{l2.1}. Based on \eqref{3.26}, the regularities in \eqref{3.21a} can be derived by a standard reasoning involving the known parabolic regularity theory in \cite{L-S-Y1968} (cf. \cite[Theorem 2.1]{WW-JDDE2020}).
\end{proof}

\begin{remark}\label{r3.1}
By some minor changes, it can be shown that the criterion in Proposition \ref{p3.4} also holds for the cases that (i) $a_1=b_1=0$ and $b_2>0$ with $\beta>1$, (ii) $a_2=b_2=0$ and $b_1>0$ and $\alpha>1$, (iii) $a_i=b_i=0$ for $i=1,2$.
\end{remark}

\section{Boundedness for the forager-exploiter model with logistic sources}
\setcounter{equation}{0} {\setlength\arraycolsep{2pt}
\begin{proof}[\rm\bf Proof of Theorem \ref{t1.1}]
Since $\alpha,\beta>\frac{N}{2}+1$, the conditions of Proposition \ref{p3.4} are satisfied according to \eqref{2.4}. Hence, we can get Theorem \ref{t1.1} from Proposition \ref{p3.4}.
\end{proof}

The following lemma shows that one can improve the space-time regularity of $u$ provided $\alpha\ge2$ and $\beta\ge2$ in two dimensional setting. We also establish an explicit upper bound so that it can be used in the stability arguments.
\begin{lem}\label{l4.1}
Let $N=2$. Suppose that $\alpha\ge2,\ \beta\ge2$. Then there exists $C>0$ independent of $\chi,\xi,b_1,b_2$ such that
\bes
\int_t^{t+\tau}\ii u^3\le C\kk(M_1\hm_1(\hm_1+\hm_2)e^{C(\chi^2+M_1^4+1)(\hm_1+\hm_2)}+M_1^3\rr)\ \ \ \ \ for\ all\ t\in(0,\tyy),\label{4.1}
\ees
where $\hm_1=b_1^{-1}(b_1^{-\frac1{\alpha-1}}+1)+1$ and $\hm_2=b_2^{-1}(b_2^{-\frac1{\beta-1}}+1)+1$.
\end{lem}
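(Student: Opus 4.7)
The plan is to exploit the two-dimensional Gagliardo-Nirenberg inequality $\|u\|_{L^3(\oo)}^3 \le C\|\nn u\|_{L^2(\oo)}^2\|u\|_{L^1(\oo)} + C\|u\|_{L^1(\oo)}^3$ together with $\ii u\le M_1$ to reduce the claim to an estimate for $\int_t^{t+\tau}\ii|\nn u|^2$, which will in turn follow from an energy identity for the $u$-equation combined with $W^{2,2}$-regularity for $w$ and a uniform Gronwall argument.

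First I would gather the necessary $w$-regularity. Since $\alpha,\beta\ge 2$ and $\mathcal{H}_1,\mathcal{H}_2\ge 1$, the elementary estimate $x^{2/\alpha}\le x+1$ for $x\ge 0$ combined with H\"older in space promotes \qq{2.4} to space-time $L^2$-bounds $\int_t^{t+\tau}\ii u^2\le C\mathcal{H}_1$ and $\int_t^{t+\tau}\ii v^2\le C\mathcal{H}_2$ on $(0,\tyy)$. Plugging these into Lemma \ref{l2.4} with $p=1$ and invoking Lemma \ref{l2.6} yields a pointwise bound $\ii|\nn w(\cdot,t)|^2\le C(\mathcal{H}_1+\mathcal{H}_2)$ on $(0,\ty)$. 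Writing $w_t=\tr w + f$ with $f=-(u+v)w-\mu w+r$, the $L^\yy$-bound $\|w\|_{\lo}\le M$ from Lemma \ref{l2.2} gives $\int_t^{t+\tau}\ii|f|^2\le C(\mathcal{H}_1+\mathcal{H}_2)$, so that Lemma \ref{l2.5} at $p=2$ supplies the crucial regularity $\int_t^{t+\tau}\ii|\tr w|^2\le C(\mathcal{H}_1+\mathcal{H}_2)$.

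Second, I would test the $u$-equation by $u$. After rewriting $\chi\ii u\nn u\cdot\nn w=-\tfrac{\chi}{2}\ii u^2\tr w$ this gives
\[
\dv\ii u^2 + 2\ii|\nn u|^2 + 2b_1\ii u^{\alpha+1} = -\chi\ii u^2\tr w + 2a_1\ii u^2,
\]
and the chemotaxis term is majorised by $\chi\|u\|_{L^4(\oo)}^2\|\tr w\|_{L^2(\oo)}$. The 2D Gagliardo-Nirenberg inequality $\|u\|_{L^4(\oo)}^2\le C\|\nn u\|_{L^2(\oo)}\|u\|_{L^2(\oo)} + C\|u\|_{L^2(\oo)}^2$, the Poincar\'e-type interpolation $\|u\|_{L^2(\oo)}^2 \le C(\|\nn u\|_{L^2(\oo)}^2+M_1^2)$ (which is what brings the $L^1$-mass $M_1$ into the coefficients), and Young's inequality together should deliver a differential inequality
\[
\dv\ii u^2 + \ii|\nn u|^2 \le \phi(t)\ii u^2 + \psi(t),
\]
where both $\int_{t-\tau}^t\phi$ and $\int_{t-\tau}^t\psi$ are dominated by $C(\chi^2+M_1^4+1)(\mathcal{H}_1+\mathcal{H}_2)$ thanks to the time-integral control on $\|\tr w\|_{L^2(\oo)}^2$ obtained in the first step.

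Third, I would apply a uniform Gronwall argument: using $\int_{t-\tau}^t\ii u^2\le C\mathcal{H}_1$, the mean value of the integral provides some $t_0\in[t-\tau,t]$ with $\ii u^2(\cdot,t_0)\le C\mathcal{H}_1/\tau$, and integrating the ODE from $t_0$ to $t$ yields the pointwise bound $\ii u^2(\cdot,t)\le C\mathcal{H}_1\exp\bigl(C(\chi^2+M_1^4+1)(\mathcal{H}_1+\mathcal{H}_2)\bigr)$. Re-integrating over $[t,t+\tau]$ and using this pointwise bound supplies the corresponding estimate on $\int_t^{t+\tau}\ii|\nn u|^2$, which produces an extra prefactor $\mathcal{H}_1+\mathcal{H}_2$ from $\int_t^{t+\tau}(\phi+\psi)$; the 2D Gagliardo-Nirenberg inequality then converts this into \qq{4.1}. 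The main obstacle is the precise bookkeeping in the chemotaxis step: the Gagliardo-Nirenberg and Poincar\'e interpolations must be applied so that the exponent contains $\chi$ only through $\chi^2$ (rather than $\chi^4$) while $M_1$ enters as $M_1^4$, and all absolute constants must remain independent of $\chi,\xi,b_1,b_2$, the latter two entering exclusively through $\mathcal{H}_1,\mathcal{H}_2$, which is essential for the subsequent stability arguments.
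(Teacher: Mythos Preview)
Your approach is essentially the same as the paper's: both obtain the space-time $L^2$ bounds on $u,v$ from \eqref{2.4}, feed them into Lemma~\ref{l2.5} to get $\int_t^{t+\tau}\ii|\tr w|^2\le C(\hm_1+\hm_2)$, test the $u$-equation by $u$ and control the chemotaxis term via the 2D Gagliardo--Nirenberg inequality to reach a differential inequality of the form $z'+\ii|\nn u|^2\le C(\chi^2+M_1^4+1)\,z\,h$, then run a uniform Gronwall argument (mean-value trick) and finish with $\|u\|_{L^3}^3\le C(M_1\|\nn u\|_{L^2}^2+M_1^3)$. Two minor points: the intermediate pointwise bound on $\ii|\nn w|^2$ you derive via Lemma~\ref{l2.4} is not actually needed here, and your mean-value step only covers $t\ge\tau$, so (as the paper does) you must treat $t\in(0,\tau]$ separately by integrating from $t=0$.
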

\begin{proof}
The derivation of \eqref{4.1} follows the ideas of \cite[Lemma 2.5]{baiw-2016} and \cite[Lemma 4.1]{WW-M3AS2020}. However, to see how $\chi,\xi,b_1,b_2$ influence the upper bound of $\int_t^{t+\tau}\ii u^3$, delicate analysis will be processed in the following. Keeping in mind that $\hm_1,\hm_2>1$. Let $f(x,t)=-(u+v)w-\mu w+r$. For the simplicity, we set
\[\Sigma:=\kk\{|\oo|,\ii u_0,\ii u_0^2,\ii v_0,\|w_0\|_{W^{2,\yy}(\oo)},a_1,a_2,\alpha,\beta,r_*,\mu\rr\}\]
By \eqref{2.4} with $\alpha,\beta\ge2$, there exists $C_1=C_1(|\oo|,\ii u_0,\ii v_0,a_1,a_2,\alpha,\beta)>0$ such that
\bes
\int_t^{t+\tau}\ii u^2\le C_1\hm_1\ \ and\ \ \int_t^{t+\tau}\ii v^2\le C_1\hm_2\ \ \ \ \ \ \faa\ t\in(0,\tyy). \label{4.2}
\ees

We deduce from \eqref{1.3}, \eqref{2.2} and \eqref{4.2} that
\[\int_t^{t+\tau}\!\!\int_\Omega |f|^2\le C_2(\hm_1+\hm_2),\ \ t\in(0,\tyy)\]
for some $C_2=C_2(\Sigma)>0$. Recalling \eqref{2.2}, an application of Lemma \ref{l2.5} yields $C_3=C_3(\Sigma)>0$ such that
\bes
\int_t^{t+\tau}\!\!\int_\Omega |\Delta w|^2\le C_3(\hm_1+\hm_2)\ \ \ \ \ \ \faa\ t\in(0,\tyy).\label{4.5}
\ees

Testing the first equation of \eqref{1.1} by $u$ and using \eqref{2.3}, the Gagliardo-Nirenberg inequality and Young's inequality, one can find $C_4,C_5>0$ depending upon $\Sigma$ fulfilling (cf. \cite[Lemma 4.1]{WW-M3AS2020})
\bess
&&\dv\ii u^2+2\ii|\nabla u|^2\nm\\
&\le&\chi C_4(\|\nabla u\|_2\|u\|_2+ M_1^2)\|\tr w\|_2+2a_1\ii u^2\nonumber\\[0.5mm]
&\le& \|\nn u\|^2_2+C_5(\chi^2+M_1^4+1)\big(\|u\|^2_2\|\tr w\|_2^2+\|u\|^2_2+\|\tr w\|_2^2+1\big)\ \ \ \ \ \ \faa\ t\in(0,\ty),
\eess
i.e.,
\bes
 z'(t)+\ii|\nn u|^2\le C_5(\chi^2+M_1^4+1)z(t)h(t)\ \ \ \ \ \ \faa\ t\in(0,\ty),\label{4.6}
\ees
where
 \[z(t)=\int_\Omega |u(\cdot,t)|^2+1, \ \ h(t)=\int_\Omega|\Delta w(\cdot,t)|^2+1.\]

We next claim that, there is $C_*=C_*(\Sigma)>0$ such that
\bes
z(t)=\int_\Omega |u(\cdot,t)|^2+1\le C_*\hm_1e^{C_*(\chi^2+M_1^4+1)(\hm_1+\hm_2)}\ \ \ \ \ \ \faa\ t\in(0,\ty).\label{4.7}
\ees
For any $0\le \tilde t\le t<\ty$, we have from \eqref{4.6} that
\bes
z(t)\le z(\tilde t)e^{C_5(\chi^2+M_1^4+1)\int_{\tilde t}^t h(s){\rm d}s}.\label{4.6a}
\ees
By \eqref{4.6a} and \eqref{4.5}, there is $C_6=C_6(\Sigma)>0$ such that
\bes
z(t)\le z(0)e^{C_5(\chi^2+M_1^4+1)\int_0^t h(s){\rm d}s}\le C_6e^{C_6(\chi^2+M_1^4+1)(\hm_1+\hm_2)}\ \ {\rm for}\ t\in[0,\tau].\label{4.8}
\ees

If $\ty<2$, then $\ty=2\tau$ by the definition of $\tau$. Thanks to \eqref{4.6a}, \eqref{4.5} and \eqref{4.8}, one can find $C_7=C_7(\Sigma)>0$ fulfilling
\bess
z(t)\le z(\tau)e^{C_5(\chi^2+M_1^4+1)\int_{\tau}^t h(s){\rm d}s}\le C_7e^{C_7(\chi^2+M_1^4+1)(\hm_1+\hm_2)}\ \ {\rm for}\ t\in(\tau,\ty).
\eess
This combined with \eqref{4.8} gives $C_8=C_8(\Sigma)>0$ such that
\bes
z(t)=\int_\Omega |u(\cdot,t)|^2+1\le C_8e^{C_8(\chi^2+M_1^4+1)(\hm_1+\hm_2)}\ \ \ \ \ \ \faa\ t\in(0,\ty).\label{4.6b}
\ees
We thus obtain \eqref{4.7} due to $\hm_1>1$.

When $\ty\ge2$, we have $\tau=1$. For $t\in(1,\ty)$, by the known mean value theorem and \eqref{4.2}, there exists $t_0\in[t-1,t]$ such that
\bess
z(t_0)=\int_\Omega u^2(\cdot,t_0)+1\le C_1\hm_1+1\le (C_1+1)\hm_1
\eess
Making use of \eqref{4.5}, we have
\bess
 \int_{t_0}^t h(s){\rm d}s=\int_{t_0}^t\kk(\int_\Omega|\Delta w|^2{\rm d}x+1\rr){\rm d}s
&\le&\int_{t-1}^t\kk(\int_\Omega|\Delta w|^2{\rm d}x+1\rr){\rm d}s\\
&\le& C_3(\hm_1+\hm_2)+1\le (C_3+1)(\hm_1+\hm_2).
\eess
Hence, we have from \eqref{4.6a} that
\bess
z(t)\le z(t_0)e^{C_5(\chi^2+M_1^4+1)\int_{t_0}^t h(s)ds}\le C_9\hm_1e^{C_9(\chi^2+M_1^4+1)(\hm_1+\hm_2)}\ \ \ \ \ \ \faa\ 1<t<\ty.
\eess
for some $C_9=C_9(\Sigma)>0$. In conjunction with \eqref{4.8}, this shows \eqref{4.7}.

Inserting \eqref{4.7} into \eqref{4.6} yields
\bess
\dv\int_\Omega u^2+\int_\Omega|\nabla u|^2\le C_*\hm_1e^{C_*(\chi^2+M_1^4+1)(\hm_1+\hm_2)}\kk(\int_\Omega|\Delta w|^2+1\rr)\ \ \ \ \ \ \faa\ t\in(0,\ty),
\eess
which by an integration upon $(t,t+\tau)$ for $t\in(0,\ty-\tau)$ implies that, for all $t\in(0,\tyy)$,
\bess
\int_t^{t+\tau}\ii|\nn u|^2\le\ii u^2(\cdot,t)+C_*\hm_1e^{C_*(\chi^2+M_1^4+1)(\hm_1+\hm_2)}\kk(\int_t^{t+\tau}\ii|\Delta w|^2+1\rr).
\eess
By means of \eqref{4.5} and \eqref{4.7} again, one can find $C_{10}=C_{10}(\Sigma)>0$ such that
\bes
\int_t^{t+\tau}\!\!\int_\Omega|\nabla u|^2\le C_{10}\hm_1(\hm_1+\hm_2)e^{C_{10}(\chi^2+M_1^4+1)(\hm_1+\hm_2)}\ \ \ \ \ \ \faa\ t\in(0,\ty).\label{4.9}
\ees

By the Gagliardo-Nirenberg inequality, we obtain from \eqref{2.3} that
\bes
\|u\|_{L^3(\oo)}^3&\le& C_{11}\kk(\|\nn u\|_{L^2(\oo)}^2\|u\|_{L^1(\oo)}+\|u\|_{L^1(\oo)}^3\rr)\nm\\
&\le&C_{11}\kk(M_1\|\nn u\|_{L^2(\oo)}^2+M_1^3\rr)\ \ \ \ \ \ \faa\ t\in(0,\ty)\nm
\ees
for some $C_{11}=C_{11}(\Sigma)>0$. This combined with \eqref{4.9} provides
\bes
\int_t^{t+\tau}\ii u^3\le C_{11}\kk(C_{10}M_1\hm_1(\hm_1+\hm_2)e^{C_{10}(\chi^2+M_1^4+1)(\hm_1+\hm_2)}+M_1^3\rr)\ \ \ \ \ \ \faa\ t\in(0,\tyy).\nm
\ees
This shows \eqref{4.1}. The proof is end.
\end{proof}

\begin{proof}[\rm\bf Proof of Theorem \ref{t1.2}]

From \eqref{4.1} and the second inequality in \eqref{2.4} with $\beta>2$, it is easy to see that the conditions of Proposition \ref{p3.4} are satisfied for $N=2$. Then, we get Theorem \ref{t1.2} from Proposition \ref{p3.4}.
\end{proof}

\section{Large time behavior of the solution}
\setcounter{equation}{0} {\setlength\arraycolsep{2pt}

The following explicit $L^\yy$-boundedness of $u$ plays an important role in the stability analysis in the case that $r$ is a positive constant.
\begin{lem}\label{l5.0}
{\rm(i)} Let $N>2$ and $\alpha,\beta>\frac{N}2+1$. Then there exist $C>0$ and $\theta>1$ independent of $\chi,\xi,b_1,b_2$ such that
\bess
M_u:=C(\chi^\theta+1)(b_1^{-\frac{1}{\alpha-1}}+1)\kk(b_1^{-1}(b_1^{-\frac{1}{\alpha-1}}+1)+b_2^{-1}(b_2^{-\frac{1}{\beta-1}}+1)+1\rr)^\theta
\eess
satisfying
\bes
\|u(\cdot,t)\|_{L^\yy(\oo)}\le M_u\ \ \ \ \ \ for\ all\ t\in(0,\yy).\label{5.1}
\ees

{\rm(ii)} Let $N=2$ and $\alpha\ge2$ with $\beta>2$. Then there exist $C>0$ and $\theta>1$ independent of $\chi,\xi,b_1,b_2$ such that
\bess
M_u:=C(b_1^{-\frac1{\alpha-1}}+1)+C\chi^\theta M_1\kk(M_1\hm_1(\hm_1+\hm_2)e^{C(\chi^2+M_1^4+1)(\hm_1+\hm_2)}+M_1^3+\hm_2\rr)^\theta
\eess
fulfilling \eqref{5.1}, where $\hm_1=b_1^{-1}(b_1^{-\frac1{\alpha-1}}+1)+1$ and $\hm_2=b_2^{-1}(b_2^{-\frac1{\beta-1}}+1)+1$.
\end{lem}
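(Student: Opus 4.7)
The plan is to apply the boundedness criterion from Lemma \ref{l3.1} with the sharpest available space-time integrability exponents, and then just bookkeep the explicit dependence on $\chi,\xi,b_1,b_2$ of the resulting bound. Recall that Lemma \ref{l3.1} yields
\[
\|u(\cdot,t)\|_{L^\infty(\oo)}\le C\bbb(b_1^{-\frac{1}{\alpha-1}}+1\bbb)+C\chi^\theta M_1(K_1+K_2+1)^\theta
\]
whenever $\int_t^{t+\tau}\!\!\ii u^{\bar q}\le K_1$ and $\int_t^{t+\tau}\!\!\ii v^{\bar p}\le K_2$ hold for some $\bar p,\bar q>\frac{N}{2}+1$. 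The whole task reduces to choosing $\bar p,\bar q$ and feeding in the right $K_1,K_2$.

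\emph{Proof of (i).} Since $\alpha,\beta>\frac{N}{2}+1$, I can take $\bar q=\alpha$ and $\bar p=\beta$ directly. Lemma \ref{l2.2} then supplies
\[
K_1=Cb_1^{-1}\bbb(b_1^{-\frac{1}{\alpha-1}}+1\bbb),\qquad K_2=C'b_2^{-1}\bbb(b_2^{-\frac{1}{\beta-1}}+1\bbb),
\]
with $C,C'$ depending only on $|\oo|,\ii u_0,\ii v_0,a_1,a_2,\alpha,\beta$ and hence independent of $\chi,\xi,b_1,b_2$. Inserting these into Lemma \ref{l3.1} and grouping the $\chi$-free part with the $b_1^{-\frac{1}{\alpha-1}}+1$ prefactor produces precisely the announced form of $M_u$ (up to enlarging the constant $C$ and $\theta$).

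\emph{Proof of (ii).} Now $N=2$, so I need $\bar p,\bar q>2$. The assumption $\beta>2$ together with Lemma \ref{l2.2} gives $\bar p=\beta$ and $K_2=C\hm_2$ as before, with $C$ independent of $\chi,\xi,b_1,b_2$. However, the hypothesis $\alpha\ge 2$ does not yield a space-time $L^{>2}$ bound for $u$ directly; this is where Lemma \ref{l4.1} enters. It provides $\bar q=3>2$ with
\[
K_1=C\bbb(M_1\hm_1(\hm_1+\hm_2)e^{C(\chi^2+M_1^4+1)(\hm_1+\hm_2)}+M_1^3\bbb).
\]
Feeding this $K_1$ and $K_2=C\hm_2$ into Lemma \ref{l3.1} and absorbing $\hm_2$ and the constant into the bracket yields the stated $M_u$ after enlarging $C,\theta$.

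\emph{Main difficulty.} There is no new analytic content; the proof is entirely a tracking exercise. The only point that requires a little care is verifying that every constant produced along the chain Lemma \ref{l2.2} $\to$ Lemma \ref{l4.1} $\to$ Lemma \ref{l3.1} really is independent of the four parameters $\chi,\xi,b_1,b_2$ (the dependence on these parameters has been explicitly isolated in the statements already), and then showing that the resulting expression can be reorganized into the claimed product form. This is a purely algebraic manipulation and presents no obstacle beyond notational bookkeeping.
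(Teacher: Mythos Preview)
Your proposal is correct and follows essentially the same route as the paper: for (i) you feed the space-time bounds \eqref{2.4} with $\bar q=\alpha,\bar p=\beta$ into Lemma~\ref{l3.1}, and for (ii) you combine Lemma~\ref{l4.1} with the second inequality in \eqref{2.4} before invoking Lemma~\ref{l3.1}. The only small point you leave implicit is that $M_1\le C(b_1^{-\frac{1}{\alpha-1}}+1)$ (from \eqref{2.3}), which is what allows the $\chi^\theta M_1$ term in \eqref{3.3} to be absorbed into the product form claimed in part (i); but this is a trivial observation and does not affect the argument.
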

\begin{proof}
For $N>2$, we combine \eqref{2.3}, \eqref{2.4} with \eqref{3.3} to infer that
\bess
\|u(\cdot,t)\|_{L^\yy(\oo)}\le C(\chi^\theta+1)(b_1^{-\frac{1}{\alpha-1}}+1)\kk(b_1^{-1}(b_1^{-\frac{1}{\alpha-1}}+1)+b_2^{-1}(b_2^{-\frac{1}{\beta-1}}+1)+1\rr)^\theta
\eess
for some $C>0,\theta>1$. This shows \eqref{5.1}.

For $N=2$, we use \eqref{2.3}, \eqref{4.1}, the second inequality in \eqref{2.4} and \eqref{3.3} to get \eqref{5.1}.
\end{proof}

The following lemma provides an inequality which does not require any additional restrictions for $r$.
\begin{lem}\label{l5.1}
Let $N\ge2$ and
\bes
\alpha,\ \beta>\frac{N}{2}\ when\ N>2;\ \  \alpha\ge2,\ \beta>2\ for\ N=2. \label{5.6}
\ees
Then,
\bes
&&\dv\kk\{\ii\kk(u-\ub-\ub\ln\frac{u}{\ub}\rr)+\frac{\ub}{\xi^2\vb M_u^2}\ii\kk(v-\vb-\vb\ln\frac{v}{\vb}\rr)\rr\}\nm\\
&\le&\frac{\chi^2\ub}{2}\ii|\nn w|^2-b_1\ub^{\alpha-2}\ii(u-\ub)^2-\frac{b_2\ub\vb^{\beta-3}}{\xi^2M_u^2}\ii(v-\vb)^2\ \ \ \ for\ all\ t\in(0,\yy).\label{5.8}
\ees
\end{lem}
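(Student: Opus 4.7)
The plan is to test the $u$-equation against the multiplier $1-\ub/u$ and the $v$-equation against $1-\vb/v$, producing two classical entropy-type identities, and then combine them using the weight $\ub/(\xi^2\vb M_u^2)$ on the $v$-piece. This weight is tuned precisely so that the bad cross term $\ii|\nn u|^2$ coming from the second taxis mechanism $-\xi\nn\cd(v\nn u)$ in the $v$-equation will cancel against the dissipation $-\ub\ii|\nn u|^2/u^2$ produced on the $u$-side, after invoking the pointwise bound $u\le M_u$ supplied by Lemma \ref{l5.0}.

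For the $u$-side I would multiply the $u$-equation by $1-\ub/u$, integrate over $\Omega$, and integrate by parts using the homogeneous Neumann boundary condition to obtain
\[
\dv\ii\kk(u-\ub-\ub\ln\frac{u}{\ub}\rr)=-\ub\ii\frac{|\nn u|^2}{u^2}+\chi\ub\ii\frac{\nn u\cd\nn w}{u}+\ii\kk(1-\frac{\ub}{u}\rr)(a_1u-b_1u^\ap).
\]
The equilibrium identity $a_1=b_1\ub^{\ap-1}$ rewrites the last integral as $-b_1\ii(u-\ub)(u^{\ap-1}-\ub^{\ap-1})$, and the elementary factorization $(s-1)(s^{\ap-1}-1)=(s-1)^2+s(s-1)(s^{\ap-2}-1)$ with $s=u/\ub$ together with $\ap\ge 2$ (which is built into \eqref{5.6}) forces $(s-1)(s^{\ap-1}-1)\ge(s-1)^2$, and hence $-b_1\ii(u-\ub)(u^{\ap-1}-\ub^{\ap-1})\le-b_1\ub^{\ap-2}\ii(u-\ub)^2$. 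One application of Young's inequality, $\chi\ub|\nn u||\nn w|/u\le(\ub/2)|\nn u|^2/u^2+(\chi^2\ub/2)|\nn w|^2$, then absorbs half of the dissipation and leaves
\[
\dv\ii\kk(u-\ub-\ub\ln\frac{u}{\ub}\rr)\le-\frac{\ub}{2}\ii\frac{|\nn u|^2}{u^2}+\frac{\chi^2\ub}{2}\ii|\nn w|^2-b_1\ub^{\ap-2}\ii(u-\ub)^2.
\]

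A completely parallel computation for $v$, using $a_2=b_2\vb^{\beta-1}$ and $\beta\ge 2$, gives
\[
\dv\ii\kk(v-\vb-\vb\ln\frac{v}{\vb}\rr)\le-\frac{\vb}{2}\ii\frac{|\nn v|^2}{v^2}+\frac{\xi^2\vb}{2}\ii|\nn u|^2-b_2\vb^{\beta-2}\ii(v-\vb)^2.
\]
Multiplying this by $\ub/(\xi^2\vb M_u^2)$ converts its $(\xi^2\vb/2)\ii|\nn u|^2$ into $(\ub/(2M_u^2))\ii|\nn u|^2$, and invoking $u\le M_u$ yields $(\ub/(2M_u^2))\ii|\nn u|^2\le(\ub/2)\ii|\nn u|^2/u^2$, which is then exactly absorbed by the surviving $-(\ub/2)\ii|\nn u|^2/u^2$ from the $u$-side. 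Dropping the nonpositive remainder $-(\ub/(2\xi^2M_u^2))\ii|\nn v|^2/v^2$ and noting $\vb^{\beta-2}/\vb=\vb^{\beta-3}$ produces precisely \eqref{5.8}.

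The main difficulty is not any individual computation but the coefficient bookkeeping: the weight $\ub/(\xi^2\vb M_u^2)$ must be the specific value that makes the $|\nn u|^2$ cross term meet the $|\nn u|^2/u^2$ dissipation after using $u\le M_u$, and the hypothesis $\ap,\beta\ge 2$ embedded in \eqref{5.6} is exactly what is required so that the convexity identity delivers the coefficients $\ub^{\ap-2}$ and $\vb^{\beta-2}$ appearing on the right-hand side.
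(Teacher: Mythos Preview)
Your proposal is correct and follows essentially the same route as the paper: both test the $u$- and $v$-equations against $1-\ub/u$ and $1-\vb/v$, apply Young's inequality to the taxis cross terms, invoke the elementary inequality $(u-\ub)(u^{\alpha-1}-\ub^{\alpha-1})\ge \ub^{\alpha-2}(u-\ub)^2$ for the reaction contributions (the paper cites this from \cite{DingWZZ-JDE2020} while you supply the neat self-contained factorization), and then combine with the weight $\ub/(\xi^2\vb M_u^2)$ together with $u\le M_u$ so that the $\ii|\nn u|^2$ term from the $v$-side is absorbed by the surviving $-\tfrac{\ub}{2}\ii|\nn u|^2/u^2$ on the $u$-side. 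The only cosmetic difference is the order in which $u\le M_u$ is applied; the paper converts $-\tfrac{\ub}{2}\ii|\nn u|^2/u^2$ to $-\tfrac{\ub}{2M_u^2}\ii|\nn u|^2$ first and then adds, whereas you add first and then compare, which is equivalent.
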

\begin{proof}
By standard calculations,
\bes
&&\dv\ii\kk(u-\ub-\ub\ln\frac{u}{\ub}\rr)\nm\\
&=&-\ub\ii\frac{|\nn u|^2}{u^2}-\chi\ub\ii\frac{\nn u\cdot\nn w}{u}-b_1\ii(u-\ub)(u^{\alpha-1}-\ub^{\alpha-1})\nm\\
&\le&-\frac{\ub}{2}\ii\frac{|\nn u|^2}{u^2}+\frac{\chi^2\ub}{2}\ii|\nn w|^2-b_1\ub^{\alpha-2}\ii(u-\ub)^2\nm\\
&\le&-\frac{\ub}{2M_u^2}\ii |\nn u|^2+\frac{\chi^2\ub}{2}\ii|\nn w|^2-b_1\ub^{\alpha-2}\ii(u-\ub)^2,\label{5.9}
\ees
where we have used Young's inequality, \eqref{5.1} and a basic inequality (cf. \cite[(4.14)]{DingWZZ-JDE2020}). Similarly,
\bes
&&\dv\ii\kk(v-\vb-\vb\ln\frac{v}{\vb}\rr)\nm\\
&\le&-\frac{\vb}{2}\ii \frac{|\nn v|^2}{v^2}+\frac{\xi^2\vb}{2}\ii|\nn u|^2-b_2\vb^{\beta-2}\ii(v-\vb)^2\nm\\
&\le&\frac{\xi^2\vb}{2}\ii|\nn u|^2-b_2\vb^{\beta-2}\ii(v-\vb)^2.\label{5.10}
\ees
Multiplying \eqref{5.10} by $\frac{\ub}{\xi^2\vb M_u^2}$ and adding the obtained result to \eqref{5.9}, we get
\bes
&&\dv\kk\{\ii\kk(u-\ub-\ub\ln\frac{u}{\ub}\rr)+\frac{\ub}{\xi^2\vb M_u^2}\ii\kk(v-\vb-\vb\ln\frac{v}{\vb}\rr)\rr\}\nm\\
&\le&\frac{\chi^2\ub}{2}\ii|\nn w|^2-b_1\ub^{\alpha-2}\ii(u-\ub)^2-\frac{b_2\ub\vb^{\beta-3}}{\xi^2M_u^2}\ii(v-\vb)^2.\nm
\ees
This shows \eqref{5.8}.
\end{proof}

\subsection{Global stability of $(\ub,\vb,0)$}

With fast decaying property \eqref{5.1a}, the large time behavior of the solution reads as follows.
\begin{lem}\label{l5.3}
Suppose that \eqref{1.2} and \eqref{5.1a} hold as well as
\bess
\alpha,\ \beta>\frac{N}{2}\ when\ N>2;\ \  \alpha\ge2,\ \beta>2\ for\ N=2.
\eess
Then,
\bes
\|u(\cdot,t)-\ub\|_{L^\yy(\oo)}+\|v(\cdot,t)-\vb\|_{L^\yy(\oo)}+\|w(\cdot,t)\|_{L^\yy(\oo)}\rightarrow0\ \ as\ \ t\rightarrow\yy.\label{5.11}
\ees
\end{lem}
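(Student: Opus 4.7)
\textbf{Proof proposal for Lemma \ref{l5.3}.}

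The plan is to turn Lemma \ref{l5.1} into a genuine Lyapunov argument by showing that the only indefinite term on its right-hand side, namely $\frac{\chi^2\ub}{2}\ii|\nn w|^2$, is integrable in time. First I would test the $w$-equation in \eqref{1.1} against $w$ and use \eqref{2.2} to obtain
\bess
\frac{1}{2}\frac{d}{dt}\ii w^2+\ii|\nn w|^2+\mu\ii w^2\le\ii rw\le M\ii r,
\eess
and then integrate on $(0,\yy)$. The fast-decay hypothesis \eqref{5.1a} immediately yields
\bess
\int_0^\yy\ii|\nn w|^2<\yy\quad\text{and}\quad\int_0^\yy\ii w^2<\yy.
\eess

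Next I would integrate \eqref{5.8} on $(0,T)$. Because $s-\ub-\ub\ln(s/\ub)\ge 0$ for all $s>0$, the functional on the left-hand side is nonnegative, so we may drop its value at $t=T$ and obtain
\bess
b_1\ub^{\alpha-2}\int_0^T\!\!\ii(u-\ub)^2+\frac{b_2\ub\vb^{\beta-3}}{\xi^2M_u^2}\int_0^T\!\!\ii(v-\vb)^2\le F(0)+\frac{\chi^2\ub}{2}\int_0^T\!\!\ii|\nn w|^2,
\eess
where $F(0)$ denotes the initial value of the functional on the left of \eqref{5.8}. Letting $T\to\yy$ and using the previous step gives
\bess
\int_0^\yy\ii(u-\ub)^2<\yy\quad\text{and}\quad\int_0^\yy\ii(v-\vb)^2<\yy.
\eess

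Now I would invoke the uniform H\"older bounds \eqref{3.21a} from Proposition \ref{p3.4}: the maps $t\mapsto\|u(\cdot,t)-\ub\|_{L^2(\oo)}^2$, $t\mapsto\|v(\cdot,t)-\vb\|_{L^2(\oo)}^2$ and $t\mapsto\|w(\cdot,t)\|_{L^2(\oo)}^2$ are uniformly Lipschitz in time. Combined with the integrability just established, a standard argument (any nonnegative uniformly continuous function with finite integral on $(0,\yy)$ must tend to $0$) yields
\bess
\|u(\cdot,t)-\ub\|_{L^2(\oo)}+\|v(\cdot,t)-\vb\|_{L^2(\oo)}+\|w(\cdot,t)\|_{L^2(\oo)}\to 0\quad\text{as }t\to\yy.
\eess
Finally, the uniform $C^{2+\theta,1+\theta/2}$ bound supplied by \eqref{3.21a} gives a uniform $W^{1,\yy}(\oo)$ bound on $u-\ub$, $v-\vb$ and $w$, so the Gagliardo-Nirenberg interpolation
\bess
\|\varphi\|_{L^\yy(\oo)}\le C\|\varphi\|_{L^2(\oo)}^{\kappa}\|\varphi\|_{W^{1,\yy}(\oo)}^{1-\kappa}
\eess
with $\kappa=2/(N+2)$ upgrades the $L^2$ convergence to the claimed $L^\yy$ convergence \eqref{5.11}.

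\textbf{Main obstacle.} The only subtle point I foresee is verifying that the uniform H\"older regularity on $[t,t+1]$ from Proposition \ref{p3.4} indeed yields the uniform-in-time equicontinuity of $t\mapsto\|u(\cdot,t)-\ub\|_{L^2(\oo)}^2$, etc.; however this is straightforward since $\bigl|\frac{d}{dt}\ii(u-\ub)^2\bigr|\le 2\|u_t\|_{L^2}\|u-\ub\|_{L^2}$ and both factors are uniformly controlled by \eqref{3.21a}. Everything else is an exploitation of the dissipation structure already isolated in Lemma \ref{l5.1}.
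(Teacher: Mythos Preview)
Your proof is correct and follows essentially the same strategy as the paper: test the $w$-equation by $w$, combine with the dissipation inequality \eqref{5.8}, use \eqref{5.1a} to obtain time-integrability of $\ii(u-\ub)^2$, $\ii(v-\vb)^2$ and $\ii w^2$, then apply Barb\u{a}lat-type reasoning together with the uniform regularity \eqref{3.21a} and Gagliardo--Nirenberg interpolation. The only cosmetic difference is that the paper adds $\frac{\chi^2\ub}{4}\ii w^2$ to the Lyapunov functional and works with a single inequality $\mathcal{E}'\le -\mathcal{F}+C\ii r$, whereas you first extract $\int_0^\yy\ii|\nn w|^2<\yy$ and $\int_0^\yy\ii w^2<\yy$ separately and then feed them into \eqref{5.8}; the content is identical.
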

\begin{proof}
Using \eqref{2.2}, it is easy to get
\bes
\frac12\dv\ii w^2&=&-\ii|\nn w|^2-\ii(u+v)w^2-\mu\ii w^2+\ii rw\nm\\
&\le&-\ii|\nn w|^2-\mu\ii w^2+M\ii r\ \ \ \ \ \ \faa\ t\in(0,\yy).\label{5.12}
\ees
Multiplying \eqref{5.12} by $\frac{\chi^2\ub}{2}$ and adding the obtained result to \eqref{5.8} to get
\bes
&&\dv\kk\{\ii\kk(u-\ub-\ub\ln\frac{u}{\ub}\rr)+\frac{\ub}{\xi^2\vb M_u^2}\ii\kk(v-\vb-\vb\ln\frac{v}{\vb}\rr)+\frac{\chi^2\ub}{4}\ii w^2\rr\}\nm\\
&\le&-b_1\ub^{\alpha-2}\ii(u-\ub)^2-\frac{b_2\ub\vb^{\beta-3}}{\xi^2M_u^2}\ii(v-\vb)^2-\frac{\chi^2\ub\mu}{2}\ii w^2+\frac{\chi^2\ub M}{2}\ii r\label{5.13}
\ees
for all $t\in(0,\yy)$.
Let
\bess
\mathcal{E}(t):=\ii\kk(u-\ub-\ub\ln\frac{u}{\ub}\rr)+\frac{\ub}{\xi^2\vb M_u^2}\ii\kk(v-\vb-\vb\ln\frac{v}{\vb}\rr)+\frac{\chi^2\ub}{4}\ii w^2,
\eess
and
\bess
\mathcal{F}(t):=b_1\ub^{\alpha-2}\ii(u-\ub)^2+\frac{b_2\ub\vb^{\beta-3}}{\xi^2M_u^2}\ii(v-\vb)^2+\frac{\chi^2\ub\mu}{2}\ii w^2.
\eess
Then it follows from \eqref{5.13} that
\bes
\mathcal{E}'(t)\le -\mathcal{F}(t)+\frac{\chi^2\ub M}{2}\ii r\ \ \ \ \ \ \faa\ t\in(0,\yy).\label{5.14}
\ees
Noting that $\mathcal{E}(t)\ge0$ for any $t>0$, integrating \eqref{5.14} over $(1,\yy)$ and using \eqref{5.1a}, we have
\bess
\int_1^\yy\mathcal{F}(t){\rm d}t\le \mathcal{E}(1)+\frac{\chi^2\ub M}{2}\int_1^\yy\ii r<\yy.
\eess
By the regularity of $(u,v,w)$ in \eqref{3.21a}, we know that $\mathcal{F}(t)$ is uniformly continuous in $[1,\yy)$. Making use of Barbalat's Lemma (cf. \cite{baiw-2016,Barbalat1959}), it follows that $\mathcal{F}(t)\rightarrow0$ as $t\rightarrow0$, i.e.,
\bes
\ii(u(\cdot,t)-\ub)^2+\ii(v(\cdot,t)-\vb)^2+\ii w^2\rightarrow0\ \ {\rm as}\ \ t\rightarrow0.\label{5.15}
\ees
Again by the regularities in \eqref{3.21a}, the $W^{1,\yy}(\oo)$-norm of $(u,v,w)$ is bounded. Hence, we can apply the Gagliardo-Nirenberg inequality to get
\bess
\|u-\ub\|_{L^\yy(\oo)}&\le& C_1\|u-\ub\|_{W^{1,\yy}(\oo)}^{N/(N+2)}\|u-\ub\|_{L^2(\oo)}^{2/(N+2)}\nm\\
&\le& C_2\|u-\ub\|_{L^2(\oo)}^{2/(N+2)}\ \ \ \ \ \ \faa\ t\in(0,\yy)
\eess
for some $C_1,C_2>0$. This combined with \eqref{5.15} gives the $L^\yy$ convergence statement of $u$ in \eqref{5.11}. By the same arguments, one can get the convergence statements of $v$ and $w$. The proof is finished.
\end{proof}

\begin{lem}\label{l5.3a}
Let $\alpha,\ \beta>\frac{N}{2}+1$ for $N>2$ and $\alpha\ge2,\ \beta>2$ for $N=2$. Suppose that $r$ satisfies \eqref{1.2} and
\bes
\ii r(\cdot,t)\le K e^{-\lm t}\ \ \ \ \ for\ all\ t>0 \label{5.15aa}
\ees
for some $K,\lm>0$. Then, there exist $\kappa,C>0$ such that
\bes
\|u(\cdot,t)-\ub\|_{L^\yy(\oo)}+\|v(\cdot,t)-\vb\|_{L^\yy(\oo)}+\|w(\cdot,t)\|_{L^\yy(\oo)}\le C e^{-\kappa t}\ \ \ \ \ for\ all\ t>0.\label{5.15ab}
\ees
\end{lem}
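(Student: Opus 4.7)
The plan is to refine the Lyapunov-functional argument from the proof of Lemma \ref{l5.3} by comparing the energy $\mathcal{E}(t)$ and its dissipation $\mathcal{F}(t)$ and then invoking a Gronwall-type inequality.

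First, I would invoke Lemma \ref{l5.3} to conclude that, given the hypothesis \eqref{5.15aa} (which clearly implies \eqref{5.1a}), we already have
\[
\|u(\cdot,t)-\ub\|_{L^\yy(\oo)}+\|v(\cdot,t)-\vb\|_{L^\yy(\oo)}+\|w(\cdot,t)\|_{L^\yy(\oo)}\to 0\ \ \text{as}\ \ t\to\yy.
\]
In particular, there exists $t_0>0$ such that $u(x,t)\ge \ub/2$ and $v(x,t)\ge \vb/2$ on $\bo\times[t_0,\yy)$, and both $u$ and $v$ are bounded above (by Lemma \ref{l5.0} or by the regularity \eqref{3.21a}). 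On this range, the elementary two-sided bound $c(s-\bar s)^2\le s-\bar s-\bar s\ln(s/\bar s)\le C(s-\bar s)^2$ applies with constants depending only on the a priori bounds. Thus, with $\mathcal{E}$ and $\mathcal{F}$ exactly as in the proof of Lemma \ref{l5.3}, there exists a constant $\Lambda>0$ (depending on $\chi,\xi,b_1,b_2,\mu,\ub,\vb,M_u,\alpha,\beta$) such that
\[
\mathcal{E}(t)\le \Lambda\,\mathcal{F}(t)\ \ \ \faa\ t\ge t_0.
\]

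Next, I would feed this comparison back into \eqref{5.14}. Together with the hypothesis $\ii r(\cdot,t)\le Ke^{-\lm t}$, this yields the scalar differential inequality
\[
\mathcal{E}'(t)\le -\tfrac{1}{\Lambda}\mathcal{E}(t)+\tfrac{\chi^2\ub M}{2}Ke^{-\lm t}\ \ \ \faa\ t\ge t_0.
\]
A direct integration (or integrating factor $e^{t/\Lambda}$) then yields $\mathcal{E}(t)\le C_1 e^{-\kappa_0 t}$ for some $C_1,\kappa_0>0$, where one may take any $\kappa_0\in(0,\min\{1/\Lambda,\lm\})$ (with an extra polynomial factor absorbed into $C_1$ if $1/\Lambda=\lm$). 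Because each summand in $\mathcal{E}(t)$ is nonnegative, this in turn gives
\[
\ii(u-\ub)^2+\ii(v-\vb)^2+\ii w^2\le C_2 e^{-\kappa_0 t}\ \ \ \faa\ t\ge t_0.
\]

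Finally, to pass from $L^2$ to $L^\yy$ decay, I would use the uniform $W^{1,\yy}(\oo)$ bounds on $u-\ub$, $v-\vb$, $w$ furnished by \eqref{3.21a} together with the Gagliardo-Nirenberg interpolation
\[
\|\vp\|_{L^\yy(\oo)}\le C\|\vp\|_{W^{1,\yy}(\oo)}^{N/(N+2)}\|\vp\|_{L^2(\oo)}^{2/(N+2)},
\]
applied to $\vp\in\{u-\ub,v-\vb,w\}$. This gives \eqref{5.15ab} with $\kappa=\frac{\kappa_0}{N+2}$ and an adjusted constant $C$, absorbing the transient interval $[0,t_0]$ by enlarging $C$.

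The main obstacle is the first step: establishing $\mathcal{E}\le\Lambda\mathcal{F}$ requires pointwise positivity of $u$ and $v$, which is only available after the qualitative convergence of Lemma \ref{l5.3}. A subtle but manageable point is that if the intrinsic rate $1/\Lambda$ coincides with $\lm$, the Duhamel-type estimate produces a $te^{-\lm t}$ factor, which still decays (strictly slower exponentially) and is accommodated by choosing $\kappa_0$ slightly smaller; nothing else is delicate.
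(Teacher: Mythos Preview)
Your proposal is correct and follows essentially the same route as the paper's proof: invoke Lemma~\ref{l5.3} to enter a regime where the two-sided relative-entropy bounds hold, deduce $\mathcal{E}\le\Lambda\mathcal{F}$, feed this back into \eqref{5.14}, integrate the resulting linear ODE inequality with forcing $Ke^{-\lm t}$, and finish via Gagliardo--Nirenberg and the uniform $W^{1,\infty}$ bounds from \eqref{3.21a}. One small wording issue: in the step ``because each summand in $\mathcal{E}(t)$ is nonnegative, this in turn gives $\int(u-\ub)^2+\cdots\le C_2e^{-\kappa_0 t}$,'' nonnegativity alone only bounds each relative-entropy summand by $\mathcal{E}(t)$; you then need the \emph{lower} half of your two-sided bound $c(s-\bar s)^2\le s-\bar s-\bar s\ln(s/\bar s)$ to pass to the $L^2$ quantities---but you already established that bound, so the logic is fine.
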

\begin{proof}
The following formula for $f(x)=x-a\ln x$ with $a>0$ is a consequence deduced by using L'H\^{o}pital's rule
 \[\lim_{x\to a} \frac{f(x)-f(a)}{(x-a)^2}=\lim_{x\to a} \frac{f'(x)}{2(x-a)}=\frac{1}{2a}.\]
Recalling \eqref{5.11}, there is $t_0>0$ such that, for all $t>t_0$,
\bes
\frac{1}{4\ub}\ii(u-\ub)^2&\le&\ii\kk( u-\ub-\ub\ln \frac{u}{\ub}\rr)\le\frac{1}{\ub}\ii(u-\ub)^2,\label{5.15ac}\\
\frac{1}{4\ub}\ii(v-\vb)^2&\le&\ii\kk(v-\vb-\vb\ln \frac{v}{\vb}\rr)\le\frac{1}{\vb}\ii(v-\vb)^2.\label{5.15ad}
\ees
Thanks to the right inequalities in \eqref{5.15ac} and \eqref{5.15ad}, one can find $C_1>0$ fulfilling
\bess
\mathcal{E}(t)\le \frac1{C_1}\mathcal{F}(t) \ \ \ \ \ \faa\ t>t_0,
\eess
where $\mathcal{E}(t)$ and $\mathcal{F}(t)$ are given in the proof of Lemma \ref{l5.3}. Plugging this into \eqref{5.14} gives
\bess
\mathcal{E}'(t)\le -\mathcal{F}(t)+\frac{\chi^2\ub M}{2}\ii r\le -C_1\mathcal{E}(t)+\frac{\chi^2\ub M}{2}\ii r\ \ \ \ \ \faa\ t>t_0
\eess
Hence, we have from this that
\bes
\mathcal{E}(t)&\le& e^{-C_1(t-t_0)}\mathcal{E}(t_0)+\frac{\chi^2\ub M}{2}e^{-C_1t}\int_{t_0}^t \kk(e^{C_1s}\ii r(x,s){\rm d}x\rr){\rm d}s\nm\\
&\le& \mathcal{E}(t_0)e^{C_1t_0}e^{-C_1t}+\frac{K\chi^2\ub M}{2}e^{-C_1t}\int_{t_0}^t e^{(C_1-\lm)s}{\rm d}s\nm\\
&\le& C_2e^{-\theta_1 t} \ \ \ \ \ \faa\ t>t_0\label{5.15ae}
\ees
for some $C_2,\theta_1>0$ where we used \eqref{5.15aa}.
According to the left inequalities in \eqref{5.15ac} and \eqref{5.15ad}, we use \eqref{5.15ae} to find $C_3>0$ fulfilling
\bess
\mathcal{F}(t)\le C_3e^{-\theta_1 t},
\eess
which implies
\bess
\int_\Omega (u-\ub)^2+\int_\Omega(v-\vb)^2+\int_\Omega w^2\le C_4 e^{-\theta_1 t} \ \ \ \ \ \faa\ t>t_0
\eess
for some $C_4>0$. Again by the uniform $W^{1,\yy}(\oo)$-boundedness of $(u,v,w)$ and the Gagliardo-Nirenberg inequality, we infer \eqref{5.15ab}.
\end{proof}

\begin{proof}[\rm\bf Proof of Theorem \ref{t1.3}]
We combine Lemma \ref{l5.3} and Lemma \ref{l5.3a} to get the desired conclusion.
\end{proof}

\subsection{Global stability of $(\ub,\vb,\wb)$}
\begin{lem}\label{l5.4}
Let $r$ be a positive constant and suppose that \eqref{5.6} holds. Then there exist $\tilde\chi,\tilde\xi,\tilde b_1,\tilde b_2>0$ such that, once either $\chi<\tilde\chi$ or $b_1>\tilde b_1$ and $b_2>\tilde b_2$ or $b_1>\tilde b_1$ and $\xi<\tilde \xi$, one has
\bes
\|u(\cdot,t)-\ub\|_{L^\yy(\oo)}+\|v(\cdot,t)-\vb\|_{L^\yy(\oo)}+\|w(\cdot,t)-\wb\|_{L^\yy(\oo)}\rightarrow0\ \ as\ \ t\rightarrow\yy.\label{5.16}
\ees
\end{lem}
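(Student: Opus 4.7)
The plan is to augment the Lyapunov functional behind Lemma~\ref{l5.3} with a quadratic penalty on $w-\wb$ and to verify that in each of the three stated parameter regimes the augmented functional dissipates to zero; the $L^\yy$-convergence then follows by the same Barbalat/Gagliardo--Nirenberg argument already used in Lemma~\ref{l5.3}. First I would derive the evolution of $w-\wb$: since $r=\wb(\ub+\vb+\mu)$, a direct subtraction gives $(w-\wb)_t=\tr(w-\wb)-(u+v+\mu)(w-\wb)-\wb[(u-\ub)+(v-\vb)]$, and testing by $w-\wb$, discarding the nonnegative $(u+v)(w-\wb)^2$-piece and splitting the cross terms via Young's inequality leads to
\bess
\tfrac12\dv\ii(w-\wb)^2+\ii|\nn(w-\wb)|^2+\tfrac{\mu}{2}\ii(w-\wb)^2\le\tfrac{\wb^2}{\mu}\ii(u-\ub)^2+\tfrac{\wb^2}{\mu}\ii(v-\vb)^2.
\eess

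Because $\ii|\nn w|^2=\ii|\nn(w-\wb)|^2$, multiplying this inequality by $\chi^2\ub$ and adding to \eqref{5.8} annihilates the gradient contributions exactly, yielding
\bess
\dv E(t)\le -A_1\ii(u-\ub)^2-A_2\ii(v-\vb)^2-\tfrac{\chi^2\ub\mu}{2}\ii(w-\wb)^2
\eess
with
\bess
E(t):=\ii\!\kk(u-\ub-\ub\ln\tfrac{u}{\ub}\rr)+\tfrac{\ub}{\xi^2\vb M_u^2}\ii\!\kk(v-\vb-\vb\ln\tfrac{v}{\vb}\rr)+\tfrac{\chi^2\ub}{2}\ii(w-\wb)^2
\eess
and $A_1=\ub(b_1\ub^{\alpha-3}-\chi^2\wb^2/\mu)$, $A_2=\ub(b_2\vb^{\beta-3}/(\xi^2 M_u^2)-\chi^2\wb^2/\mu)$. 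The heart of the proof is thereby reduced to producing thresholds $\tilde\chi,\tilde\xi,\tilde b_1,\tilde b_2>0$ for which $A_1,A_2>0$ under each of the three conditions.

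The required bookkeeping rests on three observations: $\wb\le r/\mu$ is universally bounded; $b_i\ub^{\alpha_i-3}=b_i^{2/(\alpha_i-1)}a_i^{(\alpha_i-3)/(\alpha_i-1)}\to\yy$ as $b_i\to\yy$ for $\alpha_i>1$; and the explicit constant $M_u$ from Lemma~\ref{l5.0} is independent of $\xi$ and remains bounded when $b_1,b_2\to\yy$ (its $b_i$-dependent factors are inverse powers of $b_i$, together with, for $N=2$, the factor $\hm_i=b_i^{-1}(b_i^{-1/(\alpha_i-1)}+1)+1\to 1$ inside the exponential). Hence: (i) fixing $b_1,b_2,\xi$ and shrinking $\chi$ kills both $\chi^2\wb^2/\mu$-terms while $b_1\ub^{\alpha-3}$ and $b_2\vb^{\beta-3}/(\xi^2M_u^2)$ stay fixed and positive; (ii) enlarging $b_1,b_2$ simultaneously drives $b_1\ub^{\alpha-3}$ and $b_2\vb^{\beta-3}$ to infinity while $M_u$ remains bounded; (iii) enlarging $b_1$ handles $A_1$, and since $M_u$ does not depend on $\xi$, sending $\xi\downarrow 0$ drives $b_2\vb^{\beta-3}/(\xi^2M_u^2)\to\yy$, securing $A_2$ without any smallness condition on $\chi$.

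Once $A_1,A_2$ and $\chi^2\ub\mu/2$ are realized as positive constants, $E(t)\ge 0$ and $E'(t)\le-\mathcal F(t)$ with $\mathcal F$ controlling $\|u-\ub\|_{L^2(\oo)}^2+\|v-\vb\|_{L^2(\oo)}^2+\|w-\wb\|_{L^2(\oo)}^2$; integrating gives $\int_1^\yy\mathcal F<\yy$, and the uniform H\"older regularity \eqref{3.21a} makes $\mathcal F$ uniformly continuous on $[1,\yy)$, so Barbalat's lemma forces $\mathcal F(t)\to 0$ and the Gagliardo--Nirenberg interpolation used at the end of the proof of Lemma~\ref{l5.3} upgrades this to \eqref{5.16}. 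The main obstacle is purely the parameter bookkeeping above: the intricate dependence of $M_u$ on $(\chi,b_1,b_2)$, which in two space dimensions enters via an exponential in Lemma~\ref{l5.0}(ii), is precisely what prevents a single unified smallness/largeness hypothesis and forces the three disjoint sufficient conditions asserted in the statement.
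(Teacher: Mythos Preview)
Your proposal is correct and follows the same overall architecture as the paper: combine inequality \eqref{5.8} with a $w$-contribution that absorbs $\ii|\nn w|^2$, verify positivity of the resulting coefficients via the explicit form of $M_u$ from Lemma~\ref{l5.0}, and then close with Barbalat and Gagliardo--Nirenberg exactly as in Lemma~\ref{l5.3}. The one genuine difference is the choice of the $w$-component of the Lyapunov functional. You take the plain quadratic $\tfrac{\chi^2\ub}{2}\ii(w-\wb)^2$, obtained by testing the linear equation for $w-\wb$ against itself; the paper instead uses the relative entropy $\tfrac{\chi^2\ub M^2}{2\wb}\ii\bigl(w-\wb-\wb\ln\tfrac{w}{\wb}\bigr)$ and derives \eqref{5.17}. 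Your route is arguably the more elementary one (no logarithm, no appeal to strict positivity of $w$, and $\ii|\nn(w-\wb)|^2=\ii|\nn w|^2$ comes with coefficient $1$ rather than $\wb/M^2$); it produces threshold coefficients $A_i$ with competing constant $\chi^2\wb^2/\mu$ in place of the paper's $\chi^2 M^3/(2r)$, but the ensuing parameter discussion is qualitatively identical and your bookkeeping for the three regimes is accurate. One cosmetic slip: with your multiplier $\chi^2\ub$ the gradient terms do not cancel \emph{exactly} --- you produce $\chi^2\ub\ii|\nn w|^2$ on the absorbing side against $\tfrac{\chi^2\ub}{2}\ii|\nn w|^2$ from \eqref{5.8} --- but the surplus has the favorable sign and can simply be discarded.
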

\begin{proof}
First of all, we claim that
\bes
b_1\ub^{\alpha-2}-\frac{\chi^2\ub M^3}{2r}>0\ \ {\rm and}\ \ \frac{b_2\ub\vb^{\beta-3}}{\xi^2M_u^2}-\frac{\chi^2\ub M^3}{2r}>0\label{5.15a}
\ees
holds for small $\chi$ or large $b_1$ and $b_2$ or large $b_1$ and small $\xi$. Since the discussion of case $N=2$ is same with the case $N>2$, we only get the arguments of $N>2$. We recall from Lemma \ref{l5.0}(i) that
\bess
M_u:=C_1(\chi^\theta+1)(b_1^{-\frac{1}{\alpha-1}}+1)\kk(b_1^{-1}(b_1^{-\frac{1}{\alpha-1}}+1)+b_2^{-1}(b_2^{-\frac{1}{\beta-1}}+1)+1\rr)^\theta
\eess
for some $C_1>0,\theta>1$ independent of $\chi,\xi,b_1,b_2$. Inserting this into \eqref{5.15a} and using the definitions of $\ub$ and $\vb$, one can find $C_2,C_3,C_4,C_5>0$ independent of $\chi,\xi,b_1,b_2$ such that \eqref{5.15a} can be written as
\bes
C_2b_1^{\frac1{\alpha-1}}-C_3\chi^2b_1^{-\frac1{\alpha-1}}>0\label{5.16a}
\ees
and
\bes
\frac{C_4b_1^{-\frac1{\alpha-1}}b_2^{\frac2{\beta-1}}}
{\xi^2(\chi^\theta+1)(b_1^{-\frac{1}{\alpha-1}}+1)\kk(b_1^{-1}(b_1^{-\frac{1}{\alpha-1}}+1)+b_2^{-1}(b_2^{-\frac{1}{\beta-1}}+1)+1\rr)^\theta}
-C_5\chi^2b_1^{-\frac1{\alpha-1}}>0.     \label{5.16b}
\ees
The inequalities \eqref{5.16a} and \eqref{5.16b} can be satisfied in the following three cases:
\begin{itemize}
\item {\bf Small $\chi$}: There is $\tilde\chi=\tilde\chi(\xi,b_1,b_2)>0$ such that \eqref{5.16a} and \eqref{5.16b} hold for $\chi<\tilde\chi$.
\item {\bf Large $b_1$ and $b_2$}: There exist $\tilde b_1=\tilde b_1(\chi)>0$ and $\tilde b_2=\tilde b_2(\chi,\xi,b_1)>0$ such that \eqref{5.16a} and \eqref{5.16b} hold for $b_1>\tilde b_1$ and $b_2>\tilde b_2$.
\item {\bf Large $b_1$ and small $\xi$}: One can find $\tilde b_1=\tilde b_1(\chi)>0$ and $\tilde\xi>0$ depending on $\chi,b_1,b_2$ such that \eqref{5.16a} and \eqref{5.16b} hold for $b_1>\tilde b_1$ and $\xi<\tilde \xi$.
\end{itemize}
Therefore, \eqref{5.15a} holds under the conditions of this lemma.

By honest computations, we have
\bes
&&\dv\ii\kk(w-\wb-\wb\ln\frac{w}{\wb}\rr)\nm\\
&=&-\wb\ii\frac{|\nn w|^2}{w^2}+\ii(w-\wb)\kk(-(u+v)-\mu+\frac{r}{w}\rr)\nm\\
&\le&-\frac{\wb}{M^2}\ii|\nn w|^2+\ii(w-\wb)\kk[(\ub-u)+(\vb-v)+\frac{r(\wb-w)}{\wb w}\rr]\nm\\
&\le&-\frac{\wb}{M^2}\ii|\nn w|^2-\ii(u-\ub)(w-\wb)-\ii(v-\vb)(w-\wb)-\ii\frac{r}{\wb w}(w-\wb)^2\nm\\
&\le&-\frac{\wb}{M^2}\ii|\nn w|^2-\ii(u-\ub)(w-\wb)-\ii(v-\vb)(w-\wb)-\frac{r}{\wb M}\ii(w-\wb)^2\nm\\
&\le&-\frac{\wb}{M^2}\ii|\nn w|^2+\frac{\wb M}{r}\ii(u-\ub)^2+\frac{\wb M}{r}\ii(v-\vb)^2-\frac{r}{2\wb M}\ii(w-\wb)^2\label{5.17}
\ees
for all $t\in(0,\yy)$. We multiply \eqref{5.17} with $\frac{\chi^2\ub M^2}{2\wb}$ and add the obtained result to \eqref{5.8} to eliminate $\ii|\nn w|^2$, i.e.,
\bes
&&\dv\kk\{\ii\kk(u-\ub-\ub\ln\frac{u}{\ub}\rr)+\frac{\ub}{\xi^2\vb M_u^2}\ii\kk(v-\vb-\vb\ln\frac{v}{\vb}\rr)
+\frac{\chi^2\ub M^2}{2\wb}\ii\kk(w-\wb-\wb\ln\frac{w}{\wb}\rr)\rr\}\nm\\
&\le&-\kk(b_1\ub^{\alpha-2}-\frac{\chi^2\ub M^3}{2r}\rr)\ii(u-\ub)^2-\kk(\frac{b_2\ub\vb^{\beta-3}}{\xi^2M_u^2}-\frac{\chi^2\ub M^3}{2r}\rr)\ii(v-\vb)^2\nm\\
&&-\frac{r\chi^2\ub M}{4\wb^2}\ii(w-\wb)^2\ \ \ \ \ \ \faa\ t\in(0,\yy),\label{5.18}
\ees
where the coefficients in the right side are all positive according to \eqref{5.15a}. By letting
\bess
\mathcal{E}_1(t)=\ii\kk(u-\ub-\ub\ln\frac{u}{\ub}\rr)+\frac{\ub}{\xi^2\vb M_u^2}\ii\kk(v-\vb-\vb\ln\frac{v}{\vb}\rr)
+\frac{\chi^2\ub M^2}{2\wb}\ii\kk(w-\wb-\wb\ln\frac{w}{\wb}\rr)
\eess
and
\bess
\mathcal{F}_1(t)&=&\kk(b_1\ub^{\alpha-2}-\frac{\chi^2\ub M^3}{2r}\rr)\ii(u-\ub)^2+\kk(\frac{b_2\ub\vb^{\beta-3}}{\xi^2M_u^2}-\frac{\chi^2\ub M^3}{2r}\rr)\ii(v-\vb)^2\nm\\
&&+\frac{r\chi^2\ub M}{4\wb^2}\ii(w-\wb)^2,
\eess
we have from \eqref{5.18} that
\bes
\mathcal{E}_1'(t)\le -\mathcal{F}_1(t)\ \ \ \ \ \ \faa\ t\in(0,\yy).\label{5.19}
\ees
Clearly, $\mathcal{E}_1(t),\mathcal{F}_1(t)\ge0$ for any $t>0$. Parallel to the arguments performed in Lemma \ref{l5.3}, one can show \eqref{5.16}.
\end{proof}

\begin{proof}[\rm\bf Proof of Theorem \ref{t1.4}]
With \eqref{5.16} and \eqref{5.19} at hand, similar to the arguments in Lemma \ref{l5.3a} (cf. \cite{baiw-2016,WW-JDDE2020,WW-ZAMP2018}), one can easily show the conclusion.
\end{proof}

 \end{document}